\documentclass[english]{amsart}
\usepackage{amsfonts,amssymb,amsmath,amsgen,amsthm}
\usepackage{hyperref,color}
\usepackage{pdfsync}
\usepackage{mathtools}

\makeatletter
\theoremstyle{plain}
\newtheorem{thm}{\protect\theoremname}
\theoremstyle{definition}

\theoremstyle{remark}
\newtheorem{rem}[thm]{\protect\remarkname}
\theoremstyle{plain}

\theoremstyle{plain}
\newtheorem{prop}[thm]{\protect\propositionname}

\theoremstyle{plain}
\newtheorem{assumption}{Assumption}

\makeatother

\usepackage{babel}
\providecommand{\definitionname}{Definition}
\providecommand{\lemmaname}{Lemma}
\providecommand{\propositionname}{Proposition}
\providecommand{\remarkname}{Remark}
\providecommand{\theoremname}{Theorem}

\def\R{{\mathbf R}}
\def\T{{\mathbf T}}
\def\N{{\mathbf N}}
\def\Z{{\mathbf Z}}
\def\d{{\partial}}

\DeclareMathOperator{\diver}{div}

\numberwithin{equation}{section}

\date\today

\title{Initial layer analysis of relaxation-time limit of the collisional QHD}

\author[P. Antonelli]{Paolo Antonelli}
\address{Gran Sasso Science Institute, viale Francesco Crispi, 7, 67100 L'Aquila, Italy}
\email{paolo.antonelli@gssi.it}

\author[P. Marcati]{Pierangelo Marcati}
\address{Gran Sasso Science Institute, viale Francesco Crispi, 7, 67100 L'Aquila, Italy}
\email{pierangelo.marcati@gssi.it}

\author[H. Zheng]{Hao Zheng}
\address{Chinese Academy of Science, Zhongguancun Est. Rd., Haidian District, Beijing, China, 100190}
\email{zhenghao@amss.ac.cn}

\subjclass{Primary: 35Q81, 35Q35; Secondary: 35Q55, 76Y05.}
 \keywords{quantum hydrodynamics, time-relaxation limit}

\begin{document}

\maketitle

\begin{abstract}
We study the structure of the initial layer arising in the relaxation-time limit of the collisional quantum hydrodynamic (QHD) system. When the initial data are not well prepared, a fast transient regime appears near the initial time, which prevents the uniform-in-time convergence of the momentum density to its limiting value.

Using the method of matched asymptotic expansions, we derive a systematic asymptotic description of the solution with respect to the relaxation-time parameter $\tau$. In particular, we identify the fast time scale $t/\tau^{2}$ governing the initial layer and explicitly construct the corresponding inner expansion for the momentum density together with the outer expansion describing the slow dynamics. The leading-order outer dynamics are shown to coincide with the quantum drift–diffusion equation.

The asymptotic expansion is rigorously justified by establishing uniform in $\tau$ energy estimates for the remainder terms under suitable regularity assumptions on the solutions. As a consequence, we prove the strong convergence of the momentum density in $L^\infty$ in time after subtracting the leading initial-layer correction. The analysis further shows that the convergence rate of order $\tau$ is optimal for general initial data and explains the improved rate in the well-prepared case.
\end{abstract}

\section{Introduction and main results}

The main purpose of this paper is to analyze the asymptotic structure of the time-relaxation limit of the following quantum hydrodynamic (QHD) system
\begin{equation}\label{eq:QHD}
\begin{cases}
\d_{t'}\rho+\diver J=0\\
\d_{t'}J+\diver\left(\frac{J\otimes J}{\rho}\right)+\nabla p(\rho)+\rho\nabla V=\frac{1}{2}\rho\nabla\left(\frac{\triangle\sqrt\rho}{\sqrt\rho}\right)-\frac{1}{\tau}J\\
-\triangle^2V=\rho-\mathcal{C}(x),\quad (\rho,J)(0,x)=(\rho_0,J_0)(x),
\end{cases}
\end{equation}
for $(t',x)\in[0,\infty)\times {\T^d}$, where ${\T^d}=\R^d/\Z^d$ is a $d-$dimensional torus, with $1\le d\leq 3$. The unknowns $\rho$ and $J$ in \eqref{eq:QHD} represent the mass (charge) and the momentum (current) densities of the quantum fluid, respectively, and $p(\rho)$ is the isentropic pressure term. The parameter $\tau>0$ denotes the scaled momentum relaxation time. Among the physical and engineering applications of system \eqref{eq:QHD}, it is widely used to model carrier transport in semiconductor devices \cite{J} at nanoscales, when quantum effect must be taking into account \cite{AI, G}. 
The term $-\frac{1}{\tau}J$ on the right-hand side of the momentum density equation introduces a dissipative effect in the system, that phenomenologically describes electron collisions in the semiconductor device \cite{AT, BW}.
The function $V$ denotes a self-consistent electric potential, satisfying the Poisson equation, and $\mathcal{C}(x)$ denotes the density of background positively charged ions. 
The quantum stress tensor satisfies the following identities
\begin{equation}\label{eq:bohm}
\frac12\rho\nabla\left(\frac{\triangle\sqrt{\rho}}{\sqrt{\rho}}\right)=\frac14\diver(\rho\nabla^2\log\rho)=\frac14\nabla\triangle\rho-\diver(\nabla\sqrt{\rho}\otimes\nabla\sqrt\rho).
\end{equation}
The left hand side is commonly interpreted as a quantum pressure expressed in terms of a quantum enthalpy, while the right hand side provides the decoupling into a linear dispersive tensor and a quadratic term taking into account the Fisher information \cite{HC}.

We are interested in the time-relaxation limit, namely we analyze the asymptotic dynamics in the limit when $\tau\to0$. For this purpose, we introduce the scaling
\begin{equation}\label{eq:rs_intro}
t=\tau t',\quad (\rho_\tau,J_\tau)(t,x)=\left(\rho,\frac{1}{\tau}J\right)\left(\frac{t}{\tau},x\right),
\end{equation}
then the system \eqref{eq:QHD} can be reformulated as
\begin{equation}\label{eq:QHD_rs_intro}
\left\{\begin{aligned}
&\d_{t}\rho_\tau+\diver J_\tau=0\\
&\tau^2\d_{t} J_\tau+\tau^2\diver\left(\frac{J_\tau\otimes J_\tau}{\rho_\tau}\right)+\nabla p(\rho_\tau)+\rho_\tau\nabla V_\tau=\frac{1}{2}\rho_\tau\nabla\left(\frac{\triangle\sqrt\rho_\tau}{\sqrt\rho_\tau}\right)-J_\tau\\
&-\triangle V_\tau=\rho_\tau-\mathcal{C}(x),\quad (\rho_\tau,J_\tau)(0,x)=(\rho_0,\tau^{-1}J_0)(x).
\end{aligned}\right.
\end{equation}
In this paper, we restrict ourselves to a finite rescaled time interval $t\in [0,T_0]$.
As $\tau\to 0$, system \eqref{eq:QHD_rs_intro} formally converges to the following Quantum Drift-diffusion (QDD) equation, 
\begin{equation}\label{eq:qdde}
\begin{cases}
\d_{t}\bar\rho+\diver\left[\frac{1}{2}\bar\rho\nabla\left(\frac{\triangle\sqrt{\bar\rho}}{\sqrt{\bar\rho}}\right)-\nabla p(\bar\rho)-\bar\rho\nabla V(\bar\rho)\right]=0\\
-\triangle V(\bar\rho)=\bar\rho-\mathcal{C}(x),\quad \bar\rho(0,x)=\rho_0(x).
\end{cases}
\end{equation}

The relaxation-time limit for the QHD system \eqref{eq:QHD} was first considered by \cite{JLM} for small and smooth perturbations around a constant state, and later extended in \cite{LZZ} to the bipolar case, namely the system coupling of electron and ion dynamics. In \cite{LT} and the references therein, the relaxation-time limit was analyzed for finite energy weak solutions by using relative energy method, however they need to assume the initial data to be well-prepared (see the next paragraph) and the solutions of the limiting equations \eqref{eq:qdde} to be sufficiently smooth. In the literature of the classical hydrodynamic equations, the study of relaxation-time limits is also widely performed, see for example \cite{HL,HP,JP, MN}. We also refer to \cite{DM} for a general theory of similar singular limits. In the presence of a viscous stress tensor, that in the case of \eqref{eq:QHD} consists in considering the so-called quantum Navier-Stokes equatio, the time-relaxation limit may be analyzied also in the framework of finite energy weak solution, through the derivation of suitable a priori estimates \cite{ACLS, Blub}.

In the case of the initial data that is not well-prepared, an important question that should be investigated is the phenomenon of initial layer. The initial data for the rescaled QHD system \eqref{eq:QHD_rs_intro} consists of two components, namely the initial density $\rho_0$ and the initial momentum density $J_{\tau,0}$. On the other hand, to solve the QDD equation \eqref{eq:qdde}, we only need to prescribe the density $\rho_0$ at time $t=0$, while the initial momentum density does not necessarily satisfies the limiting momentum density is required to satisfy the constitutive relation
\begin{equation}\label{eq:barJ}
\bar{J}=\frac{1}{2}\bar\rho\nabla\left(\frac{\triangle\sqrt{\bar\rho}}{\sqrt{\bar\rho}}\right)-\nabla p(\bar\rho)-\bar\rho\nabla V(\bar\rho-\mathcal{C}(x)).
\end{equation} 
Due to this mismatch of initial data, a thin layer appears near the initial time, where the momentum density in general largely deviates from the limit $\bar J$. 
The initial data for system \eqref{eq:QHD_rs_intro} is called \textit{well-prepared} if 
\begin{equation}\label{eq:cond_wellpre}
J_{\tau, 0}=\tau^{-1}J_0\to\bar J_0\quad\textrm{as }\tau\to 0,
\end{equation}
 where $\bar J_0$ is defined by letting $\bar\rho=\rho_0$ through \eqref{eq:barJ}. In this case, no initial layer is expected. 

Recently, in the authors' work \cite{AMZ3}, the relaxation-time limit was rigorously proven for general ill-prepared initial data on a 1-D torus $\T$. This result is compatible with  the phenomenon of initial layer. Moreover, in \cite{AMZ3} an explicit rate of convergence with respect to the relaxation-time parameter $\tau\to 0^+$ was obtained, namely
\[
\|\rho_\tau-\bar\rho\|_{L^\infty_tL^2_x}+\|\sqrt\rho_\tau (\d_x^2\log\sqrt\rho_\tau-\d_x^2\log\sqrt{\bar\rho})\|_{L^2_{t,x}}\leq C\tau,
\]
where $C>0$ is a constant independent of $\tau$. We also mention that it is shown in \cite{LT} that for well-prepared initial data, the convergence rate can be improved to $\tau^2$. On the other hand, the convergence of the momentum density $J_\tau$ to the limit $\bar J$ was only given in a weak $L^2_{t,x}$ sense,
\[
J_\tau \rightharpoonup \bar J \quad\textrm{in }L^2_{t,x}.
\]
To obtain a strong convergence result of the momentum density $J_\tau$, it is necessary to unveil the structure of the initial layer. By introducing a correction term $J_\mathcal{I}$ which characterizes the dynamics of the initial layer, it will allow us to prove the strong convergence of $J_\tau$ to $J_\mathcal{I}+\bar J$.

\subsection{Matched asymptotic expansions and main results}

The main purpose of this paper is to conduct a deeper analysis of the asymptotic structure of the time-relaxation limit and the initial layer. We examine the asymptotic expansion of $(\rho_\tau, J_\tau)$ with respect to the relaxation time $\tau$, both within the initial layer (inner expansion) and in regions away from the initial time (outer expansion). The inner expansion reveals the structure of the sharp transition in the momentum density $J_\tau$ near the initial time, thereby enabling an explicit characterization of the norm convergence of $J_\tau$ in a suitable Lebesgue space. Meanwhile, the outer expansion offers a more detailed description of the convergence of $(\rho_\tau, J_\tau)$ to $(\bar\rho, \bar J)$ at each order of $\tau$. As a result of this analysis, we demonstrate that for general initial data, the convergence rate of order $\tau$ established in \cite{AMZ3} is indeed optimal. Furthermore, by deriving energy estimates for the remainder terms that are uniform in $\tau$, we rigorously justify both the inner and outer asymptotic expansions. An additional outcome of our analysis is the illustration of how the convergence rate improves in the case of well-prepared initial data. Our techniques are related to the approach developed by Lattanzio and Yong for general hyperbolic balance laws \cite{LY}.

Let us consider the formal asymptotic approximation of the rescaled solutions $(\rho_\tau,J_\tau)$ of the QHD system \eqref{eq:QHD_rs_intro}, given in the form
\begin{equation}\label{eq:exptau}
\begin{aligned}
\rho_\tau(t,x)=\rho_\mathcal{O}(t,x)+\rho_\mathcal{I}(s,x)-P_\rho(s,x)+\tau^2 r(s,t,x)\\
J_\tau(t,x)= J_\mathcal{O}(t,x)+J_\mathcal{I}(s,x)-P_J(s,x)+\tau^2\mathcal{R}(s,t,x),
\end{aligned}
\end{equation}
where the fast time in general in given in the form $s=\tau^{-n} t$, $n\in\N$ the time scale of the initial layer. In what follows we will show that the initial layer phenomenon is captured by $n=2$. In general, inner expansion focuses on certain narrow regions (like boundary layers or initial layers) where the solution changes at very fast scale, while the outer expansion provides an approximation far away from the initial time, where the dynamics is governed only by the slow scale. To perform the analysis of the initial layer, it is sufficient to consider the expansion to the first order in $\tau$. Inner functions can be defined as
\begin{equation}\label{eq:inner}
\rho_\mathcal{I}(s,x)=\sum_{k=0}^1\tau^{k}\rho_{\mathcal{I},k}(s,x),\quad J_\mathcal{I}(s,x)=\sum_{k=-1}^1\tau^{k}J_{\mathcal{I},k}(s,x),
\end{equation}
while the outer functions are given by
\begin{equation}\label{eq:outer}
\rho_\mathcal{O}(s,x)=\sum_{k=0}^1\tau^{k}\rho_{\mathcal{O},k}(s,x),\quad J_\mathcal{O}(s,x)=\sum_{k=0}^1\tau^{k}J_{\mathcal{O},k}(s,x).
\end{equation}
The leading order of the inner momentum density is chosen to be $\tau^{-1}$ in order to match the scaling \eqref{eq:rs_intro} and general rescaled initial momentum density
\[
J_\tau(0)=\tau^{-1}J_0.
\]


In the asymptotic expansion \eqref{eq:exptau}, the functions $P_\rho$, $P_J$ are matching functions which will be used to connect the initial layer functions and the outer functions in a consistent way. In general cases, the matching function $P_f=\sum_k P_{f,k}$, corresponding either $f=\rho,J$, is defined as
\begin{equation}\label{eq:match_intro}
P_{f,k}(s,x)=\sum_{j=0}^{\lfloor k/n\rfloor}\frac{s^j}{j!}\left.\d_t^j f_{\mathcal{O},k-nj}\right|_{t=0},
\end{equation}
where $\lfloor k/n\rfloor$ is the integer part of $k/n$, and the following matching condition holds
\begin{equation}\label{eq:cond_match_intro}
\lim_{s\to\infty}(\rho_{\mathcal{I},k},J_{\mathcal{I},k})(s)=\lim_{s\to\infty}(P_{\rho,k},P_{J,k})(s),\quad k=0,1.
\end{equation}
In our case, \eqref{eq:match_intro} simplifies to
\begin{equation}
P_{f,k}(s,x)=\left. f_{\mathcal{O},k}\right|_{t=0},\quad k=0,1.
\end{equation}
for either $f=\rho,J$.

In the subsequent sections of this paper, we derive the explicit forms of the initial layer and the outer region expansion $ (\rho_{\mathcal{I},k}, J_{\mathcal{I},k}) $ and $ (\rho_{\mathcal{O},k}, J_{\mathcal{O},k}) $, respectively, by applying the method of matched asymptotic expansions \cite{CK} to the QHD system \eqref{eq:QHD_rs_intro}. The dynamics of the initial layer are then governed by the ODE systems given in \eqref{eq:innereq_rho} and \eqref{eq:innereq_J} for $ (\rho_{\mathcal{I},k}, J_{\mathcal{I},k}) $. The matching conditions \eqref{eq:cond_match_intro} ensure that the differences $ (\rho_{\mathcal{O},k} - P_{\rho,k}, J_{\mathcal{O},k} - P_{J,k}) $ converge to zero as the fast-time variable $ s = \tau^{-2}t $ tends to infinity. In particular, the leading-order behavior of the initial layer in the momentum density is given by
\[
\tau^{-1}J_{\mathcal{I},-1} = \tau^{-1} e^{-\frac{t}{\tau^2}} J_0.
\]
Furthermore, as shown in Section \ref{sect:outer}, the zeroth-order outer term $ \rho_{\mathcal{O},0} $ is exactly the solution $\bar\rho$ to the Cauchy problem of the quantum drift-diffusion (QDD) equation \eqref{eq:qdde}, while $ \rho_{\mathcal{O},1} $ satisfies a semilinear fourth-order parabolic equation defined in \eqref{eq:1equ} later, which is the linearized QDD dynamics around $\bar\rho$. 

Finally, we employ energy methods to establish the uniform boundedness (with respect to $ \tau $) of remainder terms $ (r, \mathcal{R}) $ under suitable regularity assumptions. Let us assume the densities are strictly positive,
\begin{equation}\label{eq:pos}
\inf_{t,x}\,\{\rho_0,\rho_\tau,\bar\rho\}\geq\delta >0
\end{equation}
and the initial velocity $v_0=\frac{J_0}{\rho_0}$ is irrotational,
\begin{equation}\label{eq:irr}
(\nabla v_0)^a=\frac12\left(\nabla v_0-\nabla v_0^T\right)=0.
\end{equation}
Moreover, we also assume that there exists a $M>0$ such that 
\begin{equation}\label{eq:reg}
\max\left\{\|\rho_0\|_{H^{9}_x},\|J_0\|_{H^{8}_x},\|\rho_\tau\|_{L^\infty_x H^{9}_x},\|J_\tau\|_{L^\infty_x H^{8}_x},\|\bar\rho\|_{L^\infty_x H^{9}_x},\|\rho_{\mathcal{O},1}\|_{L^\infty_x H^{7}_x}\right\}\leq M.
\end{equation}
We emphasize that in this paper we will not provide new results concerning the existence of solution satisfying assumption \eqref{eq:pos} and \eqref{eq:reg}, but we will confine ourselves to establish the analysis of the initial layer within such framework. For more details of these assumptions, see Assumption \ref{asmp:RA} in Section \ref{sect:en_remainder}. We also address the interested reader to \cite{AMZ1, AM_K}, for a more comprehensive discussion regarding the existence of QHD-type systems.

\begin{thm}\label{thm:main}
Let us assume \eqref{eq:pos}, \eqref{eq:irr} and \eqref{eq:reg} hold on rescaled time interval $[0,T_0]$. Then there exist $\tau^*>0$ and $C>0$, dependent on $(T_0,\delta,M)$, such that for $0<\tau<\tau^*$ the remainders $(r,\mathcal{R})$ are bounded by
\begin{equation}\label{eq:main_r}
\|r\|_{L^\infty_tH^1_x}+\|\tau\mathcal{R}\|_{L^\infty_tL^2_x}+\|\mathcal{R}\|_{L^2_{t,x}}\leq C
\end{equation}
on the time interval $[0,T_0]$.

As a consequence, we have
\begin{equation}\label{eq:main_J}
\|J_\tau-\tau^{-1}e^{-\frac{t}{\tau^2}}J_0-\bar J\|_{L^\infty_tL^2_x}\leq C\tau.
\end{equation}
Moreover, by taking into account the expansion upto order $1$ in $\tau$, the asymptotic expansion of $\rho_\tau$ satisfies
\begin{equation}\label{eq:main_rho}
\|\rho_\tau-\bar\rho-\tau [e^{-s}\diver J_0+\rho_{\mathcal{O},1}]\|_{L^\infty_tL^2_{x}}\leq C\tau^2.
\end{equation}
\end{thm}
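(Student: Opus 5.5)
The plan has three stages: build the expansion explicitly, derive the equations satisfied by the remainder $(r,\mathcal{R})$, and close an energy estimate for them that is uniform in $\tau$.

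\emph{Construction of the expansion.} I first fix the profiles by inserting \eqref{eq:exptau} into \eqref{eq:QHD_rs_intro} with $s=t/\tau^2$ and collecting powers of $\tau$.

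In the inner region the momentum equation reads $\partial_s J_\tau = -J_\tau + \dots$. At order $\tau^{-1}$ this gives $\partial_s J_{\mathcal{I},-1}=-J_{\mathcal{I},-1}$ with $J_{\mathcal{I},-1}(0)=J_0$, hence $J_{\mathcal{I},-1}=e^{-s}J_0$.

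The mass equation becomes $\partial_s\rho_\tau=-\tau^2\diver J_\tau$. It forces $\rho_{\mathcal{I},0}=\rho_0$ and $\partial_s\rho_{\mathcal{I},1}=-\diver J_{\mathcal{I},-1}$. Together with the matching condition \eqref{eq:cond_match_intro}, this yields the correction $\tau e^{-s}\diver J_0$, up to the constant absorbed into $P_{\rho,1}$.

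The order-$\tau^0$ and order-$\tau^1$ inner momentum terms solve linear ODEs $\partial_s J_{\mathcal{I},k}+J_{\mathcal{I},k}=F_k(\rho_{\mathcal{I}})$. Their limits as $s\to\infty$ match $\bar J(0)$ and $J_{\mathcal{O},1}(0)$.

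In the outer region, order $\tau^0$ gives the QDD equation \eqref{eq:qdde} with $J_{\mathcal{O},0}=\bar J$. Order $\tau^1$ gives the linearized equation for $\rho_{\mathcal{O},1}$, with $J_{\mathcal{O},1}$ determined by the linearized constitutive law.

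The regularity \eqref{eq:reg} ensures that every profile lies in $H^k$ with exponential decay in $s$ of the inner-minus-matching differences. Irrotationality \eqref{eq:irr} ensures that the inner momentum stays a gradient times density, so that the later antisymmetric terms vanish.

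\emph{Remainder equations.} Subtracting the expansion from \eqref{eq:QHD_rs_intro} gives a system
\[
\partial_t r+\diver\mathcal{R}=0,\qquad \tau^2\partial_t\mathcal{R}+\mathcal{R}=\tfrac14\nabla\triangle r-\nabla(p'(\rho_\tau) r)-\dots+\mathcal{E}_\tau,
\]
where $\mathcal{E}_\tau$ collects the truncation errors and the nonlinear differences.

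The main bookkeeping task is to show that $\mathcal{E}_\tau=O(1)$ in $L^2_{t,x}$. The difficulty is that terms like $\tau^2\diver(J_\tau\otimes J_\tau/\rho_\tau)$ involve $\tau^{-2}e^{-2s}J_0\otimes J_0$. Since $\int_0^{T_0}e^{-2t/\tau^2}\,dt=O(\tau^2)$, these are only $O(\tau^{-1})$ in $L^2_t$, not $O(1)$. They must therefore be cancelled exactly by higher inner terms. I would include the quadratic inner term $\tau^{-2}e^{-2s}$ in the ODE for $J_{\mathcal{I},0}$, and I expect this to be consistent precisely because the inner ODEs are solved exactly. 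What remains is then a product of decaying exponentials in $s$ with powers of $\tau$. Each such product integrates to $O(1)$ after the rescaling.

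\emph{Energy estimate, the main obstacle.} I test the momentum equation against $\mathcal{R}$ and the mass equation against $-\frac14\triangle r+p'(\bar\rho)r$. The dispersive term $\frac14\nabla\triangle r$ then pairs with $\diver\mathcal{R}$ to give $\frac{d}{dt}\frac18\|\nabla r\|^2$, which controls the $H^1$ norm of $r$. The relaxation term gives the dissipation $\|\mathcal{R}\|_{L^2}^2$, and $\frac{\tau^2}{2}\frac{d}{dt}\|\mathcal{R}\|^2$ gives the weighted bound on $\tau\mathcal{R}$.

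The hard terms are the nonlinear quantum terms $\diver(\nabla\sqrt\rho\otimes\nabla\sqrt\rho)$ evaluated along $\rho_\tau$ versus the expansion. They involve up to three derivatives of $r$. I would handle them using \eqref{eq:bohm}, writing the difference as $\tau^2\diver(\text{coefficient}\cdot\nabla^2 r)$ plus lower order terms. After integration by parts, the third-derivative contributions produce $\int\nabla^2r\cdot\nabla\mathcal{R}$-type terms. These have to be rewritten through $\partial_t r=-\diver\mathcal{R}$ as time derivatives of a modified energy $\int a(\rho)|\nabla^2 r|^2$-corrections of size $\tau^2$. Alternatively they can be bounded directly by $\tau^2\|r\|_{H^3}$, using the bounds on $\rho_\tau$ from \eqref{eq:reg} to control $\tau^2 r=\rho_\tau-\dots$ in $H^9$.

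With these pieces in place, Gronwall on $[0,T_0]$ yields \eqref{eq:main_r} for $\tau<\tau^*$. The positivity assumption \eqref{eq:pos} is used to bound the coefficients, and $\tau^*$ is chosen so that the cubic $\tau$-small terms are absorbed into the energy.

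\emph{Consequences.} To obtain \eqref{eq:main_J}, write $J_\tau-\tau^{-1}e^{-s}J_0-\bar J=(J_{\mathcal{I},0}-P_{J,0})+\tau(\dots)+\tau^2\mathcal{R}$.

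The last term satisfies $\|\tau^2\mathcal{R}\|_{L^\infty L^2}\le C\tau$ by \eqref{eq:main_r}. The first term is not small in $L^\infty_t$ by itself. I would check that it equals $e^{-s}$ times $O(\tau)$ data, because $J_{\mathcal{I},0}(0)$ is chosen so that the total initial datum equals $\tau^{-1}J_0$ exactly. If that check fails, the difference $e^{-s}(\bar J(0)-\dots)$ must be absorbed by adjusting the initial data of the remainder.

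Finally, \eqref{eq:main_rho} follows directly from the $\rho$-expansion and $\|r\|_{L^\infty_tL^2_x}\le C$.
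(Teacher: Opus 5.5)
Your construction of the profiles agrees with the paper, and so does your derivation of \eqref{eq:main_rho} from $\tilde\rho_\tau=\bar\rho+\tau[e^{-s}\diver J_0+\rho_{\mathcal{O},1}]$ and the bound on $r$. There are two genuine gaps: the energy estimate for \eqref{eq:main_r} does not close as you set it up, and the last step for \eqref{eq:main_J} cannot be completed.

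\textbf{The energy estimate.} Divide $\bar J(\rho_\tau)-\bar J(\tilde\rho_\tau)$ by $\tau^2$. The Fisher-information part of the Bohm term then enters the remainder momentum equation as $-\frac14\diver[(2(\nabla\bar\rho\otimes\nabla r)^s+\nabla\eta_\tau\otimes\nabla\eta_\tau)/\rho_\tau]$ (cf.\ $\mathcal{B}$ in \eqref{eq:opB}). Its coefficients are of order one, not $\tau^2$, and its second-order part is $-\frac14(I_d\triangle+\nabla^2)r\cdot\nabla\rho_\tau/\rho_\tau$. Tested against the unweighted $\mathcal{R}$, this leaves $\int\mathcal{R}\cdot(I_d\triangle+\nabla^2)r\cdot\nabla\rho_\tau/\rho_\tau\,dx$, which your energy does not control.

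Both of your fallbacks assume a factor $\tau^2$ that is not there. Interpolating between the energy and the a priori bound $\|\tau^2 r\|_{H^7_x}\le C$ implied by \eqref{eq:reg} only gives $\|\nabla^2 r\|_{L^2_x}\lesssim\tau^{-1/3}\|r\|_{H^1_x}^{5/6}$, which destroys uniformity in Gronwall.

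The missing idea is the weight $1/\rho_\tau$. The paper tests with $\overline{\mathcal{R}}/\rho_\tau$ and uses $E_\tau=\int\frac{1}{2\rho_\tau}(\tau^2|\overline{\mathcal{R}}|^2+|\nabla r|^2)\,dx$. Integrating $\frac14\int\frac{\overline{\mathcal{R}}}{\rho_\tau}\cdot\nabla\triangle r\,dx$ by parts twice, with $\diver\overline{\mathcal{R}}=\diver\mathcal{S}_\rho-\partial_t r$, produces the time derivative of the $\nabla r$-part of $E_\tau$. It also produces $\frac14\int\frac{\overline{\mathcal{R}}}{\rho_\tau^2}\cdot(I_d\triangle+\nabla^2)r\cdot\nabla\rho_\tau\,dx$, which cancels the bad term exactly because $\nabla\bar\rho+\tau\nabla\eta_\tau=\nabla\rho_\tau$. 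Only first derivatives of $r$ survive, in $\mathcal{B}_3$.

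Two smaller points in the same part are also off. First, your remainder system drops the convective term $\tau^2\diver[\overline{\mathcal{R}}\otimes(\tau^{-1}e^{-s}J_0+\xi_\tau)/\rho_\tau]$. After testing, its antisymmetric part $(\nabla(\overline{\mathcal{R}}/\rho_\tau))^a$ does not vanish. The paper rewrites it in terms of known functions using the irrotationality of the full velocity $J_\tau/\rho_\tau$, which is propagated by the flow; irrotationality of the inner profile alone is not what is needed. Second, the $\tau^{-2}e^{-2s}$ source is not cancelled exactly by the inner ODE. What survives is $\tau^{-2}e^{-2s}\diver[J_0\otimes J_0(\rho_0^{-1}-\rho_\tau^{-1})]$, which is uniformly in $L^2_t$ only because $\rho_\tau-\rho_0=-\int_0^t\diver J_\tau$ is $O(\sqrt t)$ in $H^1_x$.

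\textbf{The estimate for $J_\tau$.} Your suspicion about \eqref{eq:main_J} is justified, and neither of your proposed fixes works. Because the total datum is exactly $\tau^{-1}J_0$, one has $J_{\mathcal{I},0}(0)-P_{J,0}=-J_{\mathcal{O},0}(0)=-\bar J(\rho_0)$, so your check fails. Moving this term into the remainder would give $\tau\mathcal{R}(0)=-\tau^{-1}\bar J(\rho_0)$, contradicting \eqref{eq:main_r}.

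In fact \eqref{eq:main_J} is false as stated. At $t=0$ its left-hand side equals $\|\tau^{-1}J_0-\tau^{-1}J_0-\bar J(\rho_0)\|_{L^2_x}=\|\bar J(\rho_0)\|_{L^2_x}$, independent of $\tau$. So the bound fails for small $\tau$ whenever $\bar J(\rho_0)\neq0$.

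The paper reaches \eqref{eq:main_J} through the identity $J_\tau-\tau^{-1}e^{-s}J_0-\bar J=\tau J_{\mathcal{O},1}+\tau^2\overline{\mathcal{R}}$. That identity omits the $O(1)$ part of $\xi_\tau$ in \eqref{eq:xi}, namely $-e^{-s}[\bar J(\rho_0)+(1-e^{-s})\diver(J_0\otimes J_0/\rho_0)]=J_{\mathcal{I},0}-P_{J,0}$. What the remainder bounds actually give is one of two things:
\begin{itemize}
\item an $O(\tau)$ estimate in $L^\infty_tL^2_x$ after also subtracting $J_{\mathcal{I},0}-P_{J,0}$;
\item an $O(\tau)$ estimate of the uncorrected difference in $L^2_tL^2_x$, since $\|e^{-t/\tau^2}\|_{L^2_t}\le\tau$.
\end{itemize}
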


\begin{rem}
By the inner expansion, the leading term of the initial layer is given by
\[
J_\tau-\bar J=\tau^{-1}e^{-\frac{t}{\tau^2}}J_0+\dots,
\]
therefore the convergence of $J_\tau$ to $\bar J$ only holds in $L^p_t$ for $1\leq p<2$, which is consistent with the result in the authors' previous work \cite{AMZ3}. The subtraction of the initial-layer correction $\tau^{-1}e^{-\frac{t}{\tau^2}}J_0$ is a necessary prerequisite for obtaining the strong convergence of $J_\tau$ in $L^p_t$, where $2\le p\leq\infty$. 
\end{rem}

\subsection{Structure of the paper and notations}

This paper is structured as follows. By using the method of matching asymptotic expansions, we analyze the initial layer to obtain the equations of $(\rho_{\mathcal{I},k},J_{\mathcal{I},k})$ at each order and solve the explicit solutions in Section \ref{sect:inner}, and Section \ref{sect:outer} is devoted to the analysis of the outer functions $(\rho_{\mathcal{O},k},J_{\mathcal{O},k})$. In Section \ref{sect:remainder}, we compute the equations of the remainders $(r,\mathcal{R})$ by summarizing the previous results of the initial layer and outer functions. Last, the uniform energy estimate of $(r,\mathcal{R})$ is obtained in Section \ref{sect:en_remainder}.

In this paper, Lebesgue and Sobolev norms on ${\T^d}$ are denoted by
\[
||f||_{L_{x}^{p}}\coloneqq(\int_{{\T^d}}|f(x)|^{p}dx)^{\frac{1}{p}},
\]
\[
||f||_{W_{x}^{k,p}}\coloneqq\sum_{j=0}^k||\partial_{x}^{j}f||_{L_{x}^{p}},
\]
and $H_{x}^{k}\coloneqq H^{k}({\T^d})$ denotes the Sobolev space $W^{k,2}({\T^d})$.
Given a time interval $I\subset[0, \infty)$, the mixed space-time Lebesgue norm of $f:I\to L^{r}({\T^d})$ is
given by
\[
||f||_{L_{t}^{q}L_{x}^{r}}\coloneqq\left(\int_{I}||f(t)||_{L_{x}^{r}}^{q}dt\right)^{\frac{1}{q}}=\left(\int_{I}\left(\int_{{\T^d}}|f(x)|^{r}dx\right)^{\frac{q}{r}}dt\right)^{\frac{1}{q}}.
\]
Similarly the mixed Sobolev norm $L_{t}^{q}W_{x}^{k,r}$ is defined. We use $C$ to denote a generic constant, which may changes from line to line, $C(A)$ indicates its dependence on the quantity $A$.

\section{Inner expansion of the initial layer}\label{sect:inner}

In this section we first compute the equations of the initial layer functions \eqref{eq:inner} by considering the asymptotic expansion of QHD system \eqref{eq:QHD_rs_intro} with respect to the relaxation time $\tau$. 

We first compute the expansion of the nonlinearity in system \eqref{eq:QHD_rs_intro}. 
For convenience of notation, we use \eqref{eq:bohm} to define the consistent momentum operator as
\begin{equation}\label{eq:opbarJ}
\bar J(f)=\frac14\nabla\triangle f-\frac14\diver\left(\frac{\nabla f\otimes\nabla f}{f}\right)-\nabla p(f)-f\nabla V(f),
\end{equation}
where
\[
V(f)=(-\triangle)^{-1}f
\]
for functions $f\in L^1_x({\T^d})$ such that $\int_{\T^d} f\,dx=0$.  
Then to compute the expansion of the nonlinear operator $\bar J(f_0+f)$, we use the Taylor theorem
\[
(f_0+f)^{-1}=f_0^{-1}-\frac{f}{f_0^2}+2\int_0^f\frac{f-g}{(f_0+g)^3}dg,
\]
and
\[
p(f_0+f)=p(f_0)+p'(f_0)f+\int_0^f p''(f_0+g)(f-g)dg,
\]
Thus by letting 
\[
\rho=\rho_0+\tau\rho_{1}+\tau^2 r=\rho_0+\tau \eta_\tau,
\]
we expand the quadratic term in $\bar J(\rho)$ upto order $\tau^2$ as
\begin{align*}
\frac{\nabla\rho\otimes\nabla\rho}{4\rho}
=&\frac{\nabla\rho_0\otimes \nabla\rho_0}{4\rho_0}-\tau\rho_1\frac{\nabla\rho_0\otimes \nabla\rho_0}{4\rho_0}+\tau\frac{(\nabla\rho_0\otimes \nabla\rho_1)^s}{2\rho_0}\\
&-\tau^2 r\frac{\nabla\rho_0\otimes \nabla\rho_0}{4\rho_0}-\tau^2\eta_\tau\frac{(\nabla\rho_0\otimes \nabla\rho_1)^s}{2\rho_0}\\
&+\tau^2\frac{(\nabla\rho_0\otimes \nabla r)^s}{2\rho}+\tau^2\frac{\nabla\eta_\tau\otimes \nabla\eta_\tau}{2\rho},
\end{align*}
where $A^s=\frac12(A+A^T)$ denotes the symmetric part of matrix $A\in\R^{d\times d}$
Then we obtain the consistent momentum density as
\begin{equation}\label{eq:expdbarJ}
\bar J(\rho)=\bar J(\rho_0)+\tau \mathcal{A}(\rho_0,\rho_1)+\tau^2 \mathcal{B}(\rho).
\end{equation}
Operators $\mathcal{A}$ and $\mathcal{B}$ are defined by
\begin{equation}\label{eq:opA}
\begin{aligned}
\mathcal{A}(\rho_0,\rho_1)=&\frac14\nabla\triangle\rho_1-\frac12\diver\left[\frac{\nabla \rho_0\otimes\nabla\rho_1}{\rho_0}\right]^s+\frac14\diver\left[\frac{\rho_1\nabla \rho_0\otimes\nabla\rho_0}{\rho_0^2}\right]\\
&-\nabla[p'(\rho_0)\rho_1]-\rho_0\nabla V(\rho_1)-\rho_1\nabla V(\rho_0-\mathcal{C}(x)),
\end{aligned}
\end{equation}
and
\begin{equation}\label{eq:opB}
\begin{aligned}
\mathcal{B}(\rho)=&\frac14\nabla\triangle r-\frac14\diver\left[\frac{2(\nabla\rho_0\otimes \nabla r)^s+\nabla \eta_\tau\otimes \nabla\eta_\tau}{\rho}\right]+\frac14\diver\left[\frac{r\nabla \rho_0\otimes\nabla\rho_0}{\rho_0^2}\right]\\
&-\frac{1}{2\tau^2}\diver\left[(\nabla \rho_0\otimes\nabla\rho_0) \int_0^{\tau \eta_\tau}\frac{\tau \eta_\tau-g}{(\rho_0+g)^3}dg\right]+\frac{1}{2}\diver\left[\frac{\eta_\tau(\nabla \rho_0\otimes\nabla\rho_1)^s}{\rho_0^2}\right]\\
&-\frac{1}{\tau}\diver\left[(\nabla\rho_0\otimes\nabla\rho_1)^s\int_0^{\tau \eta_\tau}\frac{\tau \eta_\tau-g}{(\rho_0+g)^3}dg\right]\\
&-\nabla[p'(\rho_0)r]-\tau^{-2}\nabla\int_0^{\tau \eta_\tau} p''(\rho_0+g)(\tau \eta_\tau-g)dg\\
&-r\nabla V(\rho_0)-\rho_0\nabla V(r)-\eta_\tau\nabla V(\eta_\tau).
\end{aligned}
\end{equation}
Also by letting
\[
J=\tau^{-1}J_{-1}+J_0+\tau J_1+\tau^2\mathcal{R},\quad\xi_\tau=J_1+\tau\mathcal{R}.
\]
we write the quadratic term $J\otimes J/\rho$ as
\begin{equation}\label{eq:expquadJ}
\begin{aligned}
\frac{J\otimes J}{\rho}=&\tau^{-2}\frac{J_{-1}\otimes J_{-1}}{\rho_{0}}+2\tau^{-1}\frac{(J_{-1}\otimes J_{0})^s}{\rho_{0}}\\
&-\tau^{-1}\rho_{1}\frac{J_{-1}\otimes J_{-1}}{\rho_{0}^2}+\mathcal{Q}(\rho,J),
\end{aligned}
\end{equation}
where
\begin{align*}
\mathcal{Q}(\rho,J)=&\frac{J_{0}\otimes J_{0}+2[(J_{-1}+\tau J_{0})\otimes \xi_\tau ]^s+\tau^2\xi_\tau \otimes \xi_\tau }{\rho }\\
&-r\frac{J_{-1}\otimes J_{-1}}{\rho_{0}^2}+2\tau^{-2}(J_{-1}\otimes J_{-1})\int_0^{\tau \eta_{\tau}}\frac{\tau \eta_{\tau}-g}{(\rho_0+g)^3}dg\\
&-2\frac{(J_{-1}\otimes J_{0})^s}{\rho_{0}^2}\eta_{\tau}+4\tau^{-1}(J_{-1}\otimes J_{0})^s\int_0^{\tau \eta_{\tau}}\frac{\tau \eta_{\tau}-g}{(\rho_0+g)^3}dg,
\end{align*}

Now let us consider a solution $(\rho_\tau,J_\tau)$ of \eqref{eq:QHD_rs_intro}, with initial data $(\rho_0,\tau^{-1}J_0)$, in the form of expansion \eqref{eq:exptau}. To analyze the solution in the initial layer, we need to transfer the outer functions $f_{\mathcal{O}}$  ($=\rho_{\mathcal{O}}$ or $J_{\mathcal{O}}$) into fast time variable $s=\tau^{-n}t$,  
\begin{align*}
f_{\mathcal{O}}(t,x)=f_{\mathcal{O}}(\tau^n s,x)=\sum_k \tau^k f_{\mathcal{O},k}(\tau^n s,x)
\end{align*}
By Taylor expansion, we further write $f_{\mathcal{O},k}$ as
\[
f_{\mathcal{O},k}(\tau^n s,x)=\sum_{j=0}^\infty \tau^{nj}s^j\left.\d_t^j f_{\mathcal{O},k}\right|_{t=0}.
\]
Then by re-organizing terms in the order of $\tau$, we have
\[
f_{\mathcal{O}}(\tau^n s,x)=\sum_k \tau^k P_{f,k}(s,x)
\]
where
\begin{equation}\label{eq:match}
    P_{f,k}(s,x)=\sum_{j=0}^{\lfloor k/n\rfloor}\frac{s^j}{j!}\left.\d_t^j f_{\mathcal{O},k-jn}\right|_{t=0}.
\end{equation}
Thus formally we have
\[
\sum_{k\leq 1}\left[f_{\mathcal{O},k}(t,x)-P_{f,k}(s,x)\right]=\sum_{k=2}^\infty \tau^k P_{f,k}(s,x)=\mathcal{O}(\tau^2),
\]
and in the initial layer, we can write $(\rho_\tau,J_\tau)$ as
\[
\begin{aligned}
\rho_\tau(s,t,x)=\sum_{k=0}^1\rho_{\mathcal{I},k}\left(s,x\right)+\tau^2 r_\mathcal{I}(s,t,x)\\
J_\tau(s,t,x)= \sum_{k=-1}^1 J_{\mathcal{I},k}\left(s,x\right)+\tau^2\mathcal{R}_\mathcal{I}(s,t,x),
\end{aligned}
\]
By substitute it into system \eqref{eq:QHD_rs_intro} and using the expansion \eqref{eq:expdbarJ} and \eqref{eq:expquadJ}, it follows that
\begin{equation}\label{eq:inner_rho}
\begin{aligned}
0=&\tau^{- n}\d_s\rho_{\mathcal{I},0}+\tau^{1- n}\d_s\rho_{\mathcal{I},1}+\tau^{2- n}\d_s r_{\mathcal{I}}\\
&+\tau^{-1}\diver J_{\mathcal{I},-1}+\diver J_{\mathcal{I},0}+\tau\diver J_{\mathcal{I},1}+\tau^{2}\diver \mathcal{R}_\mathcal{I},
\end{aligned}
\end{equation}
and
\begin{equation}\label{eq:inner_J}
\begin{aligned}
0=&\tau^{1- n}\d_sJ_{\mathcal{I},-1}+\tau^{-1}J_{\mathcal{I},-1}\\
&\tau^{2- n}\d_sJ_{\mathcal{I},0}+\diver\left(\frac{J_{\mathcal{I},-1}\otimes J_{\mathcal{I},-1}}{\rho_{\mathcal{I},0}}\right)-\bar J(\rho_{\mathcal{I},0})+J_{\mathcal{I},0}\\
&\tau^{3- n}\d_s J_{\mathcal{I},1}+\tau\diver\left[2\frac{(J_{\mathcal{I},-1}\otimes J_{\mathcal{I},0})^s}{\rho_{\mathcal{I},0}}-\frac{J_{\mathcal{I},-1}\otimes J_{\mathcal{I},-1}\rho_{\mathcal{I},1}}{\rho_{\mathcal{I},0}^2}\right]-\tau \mathcal{A}(\rho_{\mathcal{I},0},\rho_{\mathcal{I},1})+\tau J_{\mathcal{I},1}\\
&\tau^{4- n}\d_s\mathcal{R}_\mathcal{I}+\tau^2\diver \mathcal{Q}(\rho_\tau,J_\tau)-\tau^2 \mathcal{B}(\rho_\tau)+\tau^2\mathcal{R}_\mathcal{I}.
\end{aligned}
\end{equation}
By matching the order of $\tau$ in \eqref{eq:inner_J} (since the initial layer only exists in the momentum density), the time scale of the initial layer can be chosen as
\[
n=2.
\]
Then by separating terms with different order of $\tau$, we obtain the ODE systems of inner densities and inner momentum densities as
\begin{equation}\label{eq:innereq_rho}
\left\{\begin{aligned}
&\d_s\rho_{\mathcal{I},0}=0\\
&\d_s\rho_{\mathcal{I},1}+\diver J_{\mathcal{I},-1}=0
\end{aligned}\right.
\end{equation}
and
\begin{equation}\label{eq:innereq_J}
\left\{\begin{aligned}
&\d_sJ_{\mathcal{I},-1}+J_{\mathcal{I},-1}=0\\
&\d_sJ_{\mathcal{I},0}+\diver\left(\frac{J_{\mathcal{I},-1}\otimes J_{\mathcal{I},-1}}{\rho_{\mathcal{I},0}}\right)-\bar J(\rho_{\mathcal{I},0})+J_{\mathcal{I},0}=0\\
&\d_s J_{\mathcal{I},1}+\diver \left[2\frac{(J_{\mathcal{I},-1}\otimes J_{\mathcal{I},0})^s}{\rho_{\mathcal{I},0}}-\frac{J_{\mathcal{I},-1}\otimes J_{\mathcal{I},-1}\rho_{\mathcal{I},1}}{\rho_{\mathcal{I},0}^2}\right]-\mathcal{A}(\rho_{\mathcal{I},0},\rho_{\mathcal{I},1})+J_{\mathcal{I},1}=0
\end{aligned}\right.
\end{equation}
The initial data of the inner functions are chosen to match the expansion of the original initial density and momentum density, where in our case we have fixed initial density $\rho_0$ and the initial momentum density is given by $\tau^{-1}J_0$. Therefore we set
\begin{equation}
\rho_{\mathcal{I},0}(0)=\rho_0,\quad \rho_{\mathcal{I},1}(0)=0.
\end{equation}
and
\begin{equation}
J_{\mathcal{I},-1}(0)=J_0,\quad J_{\mathcal{I},0}(0)=J_{\mathcal{I},1}(0)=0.
\end{equation}

Now we can solve the ODE systems \eqref{eq:innereq_rho} and \eqref{eq:innereq_J}. It is straightforward that
\begin{equation}\label{eq:I_-1}
\rho_{\mathcal{I},0}=\rho_0,\quad J_{\mathcal{I},-1}(s)=e^{-s}J_0.
\end{equation}
Substituting \eqref{eq:I_-1} into the second equation of \eqref{eq:innereq_rho} and \eqref{eq:innereq_J}, we obtain
\begin{equation}\label{eq:rho_I1}
\rho_{\mathcal{I},1}=(e^{-s}-1)\diver J_0
\end{equation}
and
\[
\d_sJ_{\mathcal{I},0}+e^{-2s}\diver\left(\frac{J_0\otimes J_0}{\rho_0}\right)-\bar J(\rho_0)+J_{\mathcal{I},0}=0.
\]
Thus $J_{\mathcal{I},0}$ is solved by
\begin{equation}\label{eq:J_I0}
\begin{aligned}
J_{\mathcal{I},0}(s)=&\int_0^s e^{s_1-s}\left[\bar J(\rho_0)-e^{-2s_1}\diver\left(\frac{J_0\otimes J_0}{\rho_0}\right)\right]ds_1\\
=&(1-e^{-s})\left[\bar J(\rho_0)-e^{-s}\diver\left(\frac{J_0\otimes J_0}{\rho_0}\right)\right].
\end{aligned}
\end{equation}
Inductively, $J_{\mathcal{I},1}$ is solved by
\begin{align*}
J_{\mathcal{I},1}(s)=&\int_0^s e^{s_1-s}\mathcal{A}(\rho_0,\rho_{\mathcal{I},1})(s_1)ds_1\\
&+e^{-s}\int_0^s e^{-s_1}(e^{-s_1}-1)\diver\left(\frac{J_0\otimes J_0\diver J_0}{\rho_0^2} \right)ds_1\\
&-2e^{-s}\int_0^s(1-e^{-s_1})\diver\left[\frac{(J_0\otimes\bar J(\rho_0))^s}{\rho_0}-e^{-s_1}\left(\frac{J_0}{\rho_0}\otimes\diver\left(\frac{J_0\otimes J_0}{\rho_0}\right)\right)^s\right]ds_1
\end{align*}
where by \eqref{eq:expdbarJ} and previous results we have
\begin{align*}
\mathcal{A}(\rho_0,\rho_{\mathcal{I},1})(s_1)=(e^{-s_1}-1)\mathcal{A}(\rho_0,\diver J_0).
\end{align*}
Thus we obtain
\begin{equation}\label{eq:J_I1}
\begin{aligned}
J_{\mathcal{I},1}(s)=&[e^{-s}(s+1)-1]\mathcal{A}(\rho_0,\diver J_0)+2e^{-s}(1-e^{-s}-s)\diver \left[\frac{J_0\otimes\bar J(\rho_0)}{\rho_0}\right]^s\\
&+e^{-s}(1+e^{-2s}-2e^{-s})\diver\left\{\left[\left(\frac{J_0}{\rho_0}\otimes\diver\left(\frac{J_0\otimes J_0}{\rho_0}\right)\right)^s\right]-\frac{J_0\otimes J_0\diver J_0}{2\rho_0^2}\right\}.
\end{aligned}
\end{equation}

With the explicit solutions of $(\rho_{\mathcal{I},k},J_{\mathcal{I},k})$, we can compute the limit
\[
\lim_{s\to\infty}(\rho_{\mathcal{I},k},J_{\mathcal{I},k})(s),\quad k=0,1,
\]
which will play essential role in the determination of the matching condition \eqref{eq:match_intro} for the outer functions. For the inner densities, we have
\begin{equation}\label{eq:infty_in_dens}
\lim_{s\to\infty}\rho_{\mathcal{I},0}(s)=\rho_0,\quad \lim_{s\to\infty}\rho_{\mathcal{I},1}(s)=-\diver J_0,
\end{equation}
and for the inner momentum density, we have
\begin{equation}\label{eq:infty_in_mom}
\lim_{s\to\infty}J_{\mathcal{I},0}(s)=\bar J(\rho_0),\quad \lim_{s\to\infty}J_{\mathcal{I},1}(s)=-\mathcal{A}(\rho_0,\diver J_0).
\end{equation}

\begin{rem}\label{rem:wellpre}
The result of \cite{LT} indicates that for well-prepared initial data defined by \eqref{eq:cond_wellpre}, the convergence rate of the time-relaxation limit can be improved to $\tau^2$. We will provide a structural explanation of this improvement of convergence rate by using our asymptotic expansion.

Actually, we can relax the requirement of well-prepared initial data to the follows. We say the initial momentum density $J_0(\cdot;\tau)$ is decaying if
\begin{equation}\label{eq:cond_decay}
J_0(\cdot;\tau)=\tau J^*(\cdot).
\end{equation}
In the case of decaying initial momentum density, the inner functions of initial layer are solved by
\[
\rho_{\mathcal{I},0}=\rho_0,\quad J_{\mathcal{I},-1}(s)=0,
\] 
and
\[
\rho_{\mathcal{I},1}=0,\quad J_{\mathcal{I},0}(s)=e^{-s}J^*+(1-e^{-s})\bar J(\rho_0),\quad J_{\mathcal{I},1}(s)=0.
\]
Moreover, if the initial data is well-prepared, namely $J^*=\bar J(\rho_0)$, it follows that
\[
J_{\mathcal{I},0}(s)\equiv \bar J(\rho_0),
\]
which means there exists no initial layer for such well-prepared initial momentum. We will further see the influence of such initial data on the outer functions \eqref{eq:outer} in the next section.  
\end{rem}

\section{Outer expansion and matching condition}\label{sect:outer}

In this section we will obtain the equations of the outer functions \eqref{eq:outer} following similar argument as Section \ref{sect:inner}. Moreover, we also need to determine the initial data of the outer equations, such that the outer functions connect with the initial layer in a consistent way. Let us recall the asymptotic approximations of the form
\begin{align*}
\rho_\tau(t,x)=\rho_\mathcal{O}(t,x)+\rho_\mathcal{I}(s,x)-P_\rho(s,x)+\tau^2 r(s,t,x)\\
J_\tau(t,x)= J_\mathcal{O}(t,x)+J_\mathcal{I}(s,x)-P_J(s,x)+\tau^2\mathcal{R}(s,t,x),
\end{align*}
where the matching functions $P_f=\sum_kP_{f,k}(s,x)$ ($f=\rho$ or $J$) are given by \eqref{eq:match} and are determined by the initial data of the outer functions. As shown in the last section, the inner functions $(\rho_{\mathcal{I},k},J_{\mathcal{I},k})$, $k=0,1$, decay exponentially to end states as $s\to\infty$, see formula \eqref{eq:I_-1} to \eqref{eq:J_I1}, therefore we should set the initial data of the outer functions such that the following matching condition holds,
\begin{equation}\label{eq:cond_match}
\lim_{s\to\infty}(\rho_{\mathcal{I},k},J_{\mathcal{I},k})(s)=\lim_{s\to\infty}(P_{\rho,k},P_{J,k})(s),\quad k=0,1.
\end{equation}
This condition guarantees that $\rho_{\mathcal{I},k}(s)-P_{\rho,k}(s)$ decays exponentially to $0$ in $s=\tau^{-2}t$, which makes this term a higher order remainder when we consider the outer functions, namely
\begin{align*}
\rho_\tau(s,t,x)=\sum_{k=0}^1\rho_{\mathcal{O},k}\left(t,x\right)+\tau^2 r_\mathcal{O}(s,t,x)\\
J_\tau(s,t,x)= \sum_{k=0}^1 J_{\mathcal{O},k}\left(t,x\right)+\tau^2\mathcal{R}_\mathcal{O}(s,t,x).
\end{align*}
By using computation \eqref{eq:infty_in_dens} and \eqref{eq:infty_in_mom} from the last section, we can explicitly write down the matching condition \eqref{eq:cond_match} as
\begin{equation}\label{eq:ini_outer}
\begin{aligned}
&P_{\rho,0}(x)=\rho_{\mathcal{O},0}(0,x)=\lim_{s\to\infty}\rho_{\mathcal{I},0}(s)=\rho_0,\\
&P_{\rho,1}(x)=\rho_{\mathcal{O},1}(0,x)=\lim_{s\to\infty}\rho_{\mathcal{I},1}(s)=-\diver J_0,\\
&P_{J,0}(x)=J_{\mathcal{O},0}(0,x)=\lim_{s\to\infty}J_{\mathcal{I},0}(s)=\bar{J}(\rho_0),\\
&P_{J,1}(x)=J_{\mathcal{O},1}(0,x)=\lim_{s\to\infty}J_{\mathcal{I},1}(s)=-\mathcal{A}(\rho_0,\diver J_0),
\end{aligned}
\end{equation}

Now we can compute the equations of the outer functions. Similar as the inner function , we consider an outer solution of \ref{eq:QHD_rs_intro} given by the form
\begin{align*}
\rho_\tau=\rho_{\mathcal{O},0}(t,x)+\tau\rho_{\mathcal{O},1}(t,x)+\tau^2 r_{\mathcal{O}}(s,t,x),\\
J_\tau=J_{\mathcal{O},0}(t,x)+\tau J_{\mathcal{O},1}(t,x)+\tau^2 \mathcal{R}_{\mathcal{O}}(s,t,x).
\end{align*}
We again use expansion \eqref{eq:expdbarJ} of $\bar J(\rho_\tau)$, and substitute it into equation \eqref{eq:QHD_rs_intro}. Then we obtain
\begin{equation}\label{eq:outer_rho}
\begin{aligned}
0=&\d_t\rho_{\mathcal{O},0}+\tau^{1}\d_s\rho_{\mathcal{O},1}+\tau^{2}\d_s r_{\mathcal{O}}\\
&+\diver J_{\mathcal{O},0}+\tau\diver J_{\mathcal{O},1}+\tau^{2}\diver \mathcal{R}_\mathcal{O},
\end{aligned}
\end{equation}
and
\begin{equation}\label{eq:outer_J}
\begin{aligned}
0=&\bar J(\rho_{\mathcal{O},0})-J_{\mathcal{O},0}+\tau \mathcal{A}(\rho_{\mathcal{O},0},\rho_{\mathcal{O},1})-\tau J_{\mathcal{O},1}\\
&\tau^2\mathcal{B}(\rho_\tau)-\tau^2\mathcal{R}_\mathcal{O}-\tau^2\d_tJ_\tau-\tau^2\diver\left(\frac{J_\tau\otimes J_\tau}{\rho_\tau}\right).
\end{aligned}
\end{equation}
By separating the terms in \eqref{eq:outer_rho} and \eqref{eq:outer_J} with respect to the order of $\tau$, we obtain the $0$th-order equation as
\begin{equation}\label{eq:0equ}
\d_t\rho_{\mathcal{O},0}+\diver J_{\mathcal{O},0}=0,\quad J_{\mathcal{O},0}=\bar J(\rho_{\mathcal{O},0})
\end{equation}
and the $1$st-order equation as
\begin{equation}\label{eq:1equ}
\d_t\rho_{\mathcal{O},1}+\diver J_{\mathcal{O},1}=0,\quad J_{\mathcal{O},1}=\mathcal{A}(\rho_{\mathcal{O},0},\rho_{\mathcal{O},1}),
\end{equation}
which is exactly the linearization of \eqref{eq:0equ} around $\rho_{\mathcal{O},0}$. 
By \eqref{eq:opA}, the equation of $\rho_{\mathcal{O},1}$ given by \eqref{eq:1equ} is explicitly written as a $4-$th order parabolic equation
\begin{equation}\label{eq:rho1}
\begin{aligned}
0=&\d_t\rho_{\mathcal{O},1}+\frac14\triangle^2\rho_{\mathcal{O},1}-\frac12\diver\cdot\diver\left[\frac{\nabla \bar\rho\otimes\nabla\rho_{\mathcal{O},1}}{\bar\rho}\right]^s\\
&+\frac14\diver\cdot\diver\left[\frac{(\nabla \bar\rho\otimes\nabla\bar\rho)\rho_{\mathcal{O},1}}{\bar\rho^2}\right]-\triangle [p'(\bar\rho)\rho_{\mathcal{O},1}]\\
&-\diver[\rho_{\mathcal{O},1}\nabla V(\bar\rho-\mathcal{C}(x))]-\diver[\bar\rho\nabla V(\rho_{\mathcal{O},1})],
\end{aligned}
\end{equation}
which is linear in $\rho_{\mathcal{O},1}$. Combining with the initial data given by \eqref{eq:ini_outer}, the equations of $(\rho_{\mathcal{O},0},J_{\mathcal{O},0})$ show that $(\rho_{\mathcal{O},0},J_{\mathcal{O},0})=(\bar\rho,\bar J)$ is exactly the solution of the QDD equation \eqref{eq:qdde}. The well-posedness result of equation \eqref{eq:qdde} with $\bar\rho\geq\delta>0$ is given by \cite{AMZ3}. Global well-posedness of equation \eqref{eq:rho1} directly follows the theory of analytic semigroups of parabolic equations, see for example \cite{St} and Chapter 3 of \cite{Lu}.

\begin{rem}[Well-prepared initial data]
Let us continue the argument of Remark \ref{rem:wellpre} to exhibit the consequence of decaying and well-prepared initial momentum density. For such data as defined in Remark \ref{rem:wellpre}, the matching conditions become
\begin{align*}
&P_{\rho,0}(x)=\rho_{\mathcal{O},0}(0,x)=\lim_{s\to\infty}\rho_{\mathcal{I},0}(s)=\rho_0,\\
&P_{\rho,1}(x)=\rho_{\mathcal{O},1}(0,x)=\lim_{s\to\infty}\rho_{\mathcal{I},1}(s)=0,\\
&P_{J,0}(x)=J_{\mathcal{O},0}(0,x)=\lim_{s\to\infty}J_{\mathcal{I},0}(s)=\bar{J}(\rho_0),\\
&P_{J,1}(x)=J_{\mathcal{O},1}(0,x)=\lim_{s\to\infty}J_{\mathcal{I},1}(s)=0.
\end{align*}
In this case, we see that the zero-order outer equation \eqref{eq:0equ} is still the QDD equation \eqref{eq:qdde}, which is solved by $(\bar\rho,\bar J)$. The initial data of the first-order equation \eqref{eq:1equ} vanishes, therefore $(\rho_{\mathcal{O},1},J_{\mathcal{O},1})=(0,0)$, which explain the result of \cite{LT} that for decaying and well-prepared initial momentum, the convergence rate of the relaxation-time limit is possible to be improved to $\tau^2$. 
\end{rem}

\section{Approximating solution and equations of the remainders}\label{sect:remainder}

Recall from \eqref{eq:exptau}, the approximating solution $(\tilde \rho_\tau,\tilde J_\tau)$ is defined as
\begin{equation}\label{eq:aprx}
\begin{aligned}
\tilde \rho_\tau(s,t)=&\sum_{k=0}^1 \tau^k\left[(\rho_{\mathcal{I},k}(s)-P_{\rho,k})+\rho_{\mathcal{O},k}(t)\right],\\
\tilde J_\tau(s,t)=&\tau^{-1}J_{\mathcal{I},-1}(s)+\sum_{k=0}^1 \tau^k\left[(J_{\mathcal{I},k}(s)-P_{J,k})+J_{\mathcal{O},k}(t)\right].
\end{aligned}
\end{equation}
By summarizing the results in previous sections, we can explicitly write the approximating solution as
\begin{equation}\label{eq:rho_aprx}
\tilde \rho_\tau(s,t)=\bar\rho(t)+\tau [e^{-s}\diver J_0+\rho_{\mathcal{O},1}(t)],
\end{equation}
and
\begin{equation}\label{eq:J_aprx}
\begin{aligned}
\tilde J_\tau(s,t)=&\tau^{-1}e^{-s}J_0-e^{-s}\left[\bar J(\rho_0)+(1-e^{-s})\diver\left(\frac{J_0\otimes J_0}{\rho_0}\right)\right]+\bar J(t)\\
&+2\tau e^{-s}(1-e^{-s}-s)\diver\left[\frac{J_0\otimes \bar J(\rho_0)}{\rho_0}\right]^s\\
&+2\tau e^{-s}(1+e^{-2s}-2e^{-s})\diver\left\{\left[\frac{J_0}{\rho_0}\otimes\diver\left(\frac{J_0\otimes J_0}{\rho_0}\right)\right]^s-\frac{J_0\otimes J_0\diver J_0}{2\rho_0^2}\right\}\\
&-\tau e^{-s}(s+1)\mathcal{A}(\rho_0,\diver J_0)+\tau J_{\mathcal{O},1}(t)
\end{aligned}
\end{equation}
where the fast time variable $s=\tau^{-2}t$. 

The remainder functions are defined as
\begin{equation}\label{eq:r_1}
(r,\mathcal{R})(s,t)=\tau^{-2}(\rho_\tau-\tilde \rho_\tau,J_\tau-\tilde J_\tau)(s,t),
\end{equation}
and the main purpose of this section is to obtain the equations of $(r,\mathcal{R})$ by using the previous results. 

\begin{prop}
Remainders $(r,\mathcal{R})$ satisfy the following equations,
\begin{equation}\label{eq:dtr}
\d_t r=-\sum_{k=0}^1\tau^{k-2} \diver [J_{\mathcal{I},k}-P_{J,k}](s)-\diver\mathcal{R}.
\end{equation}
and
\begin{equation}\label{eq:dtR}
\begin{aligned}
\tau^2\d_t\mathcal{R}+\sum_{k=0}^1 \tau^{k}\d_t J_{\mathcal{O},k}+\diver\left(\frac{J_\tau\otimes J_\tau}{\rho_\tau}\right)=&\tau^{-1} e^{-s}[\mathcal{A}(\bar\rho,\diver J_0)-\mathcal{A}(\rho_0,\diver J_0)]\\
&+\mathcal{B}(\bar\rho,e^{-s}\diver J_0+\rho_{\mathcal{O},1},r)+\mathcal{G}_1(s,x)-\mathcal{R},
\end{aligned}
\end{equation}
where the operator $\mathcal{A}$, $\mathcal{B}$ are defined by \eqref{eq:opA}, \eqref{eq:opB}, and the function $\mathcal{G}_1$ is given by
\begin{align*}
\mathcal{G}_1(s,x)=&\tau^{-2}e^{-2s}\diver \left(\frac{J_0\otimes J_0}{\rho_0}\right)\\
&+\tau^{-1}2e^{-s}(1-e^{-s})\diver \left[\frac{J_0}{\rho_0}\otimes\left(\bar J(\rho_0)-e^{-s}\diver \left(\frac{J_0\otimes J_0}{\rho_0}\right)\right)\right]^s\\
&-\tau^{-1}e^{-2s}(e^{-s}-1)\diver\left(\frac{J_0\otimes J_0\diver J_0}{\rho_0^2}\right)
\end{align*}
\end{prop}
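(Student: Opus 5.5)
The plan is a direct substitution. Write $\rho_\tau=\tilde\rho_\tau+\tau^2 r$ and $J_\tau=\tilde J_\tau+\tau^2\mathcal{R}$ into \eqref{eq:QHD_rs_intro}. Then use the inner ODEs \eqref{eq:innereq_rho}, \eqref{eq:innereq_J}, the outer equations \eqref{eq:0equ}, \eqref{eq:1equ}, and the expansions \eqref{eq:expdbarJ}, \eqref{eq:expquadJ} to cancel all terms of order below $\tau^2$ built from the approximate solution. Throughout, $\d_t$ acting on a function of $s=\tau^{-2}t$ gives $\tau^{-2}\d_s$, and the matching functions $P_{f,k}$ are independent of $s$ by \eqref{eq:ini_outer}.

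\emph{Continuity equation.} From \eqref{eq:aprx} we have
\[
\d_t\tilde\rho_\tau=\sum_{k=0}^1\tau^{k-2}\d_s\rho_{\mathcal{I},k}+\sum_{k=0}^1\tau^k\d_t\rho_{\mathcal{O},k}.
\]
\eqref{eq:innereq_rho} gives $\d_s\rho_{\mathcal{I},0}=0$ and $\d_s\rho_{\mathcal{I},1}=-\diver J_{\mathcal{I},-1}$. Hence the term $\tau^{-1}\d_s\rho_{\mathcal{I},1}$ cancels $\tau^{-1}\diver J_{\mathcal{I},-1}$ inside $\diver\tilde J_\tau$. The outer parts cancel $\diver J_{\mathcal{O},k}$ by \eqref{eq:0equ}, \eqref{eq:1equ}. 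What survives is $\sum_k\tau^k\diver(J_{\mathcal{I},k}-P_{J,k})$. Dividing by $\tau^2$ gives \eqref{eq:dtr}.

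\emph{Momentum equation.} I would keep $\tau^2\d_t(\sum_k\tau^kJ_{\mathcal{O},k})$ and the full convective term $\tau^2\diver(J_\tau\otimes J_\tau/\rho_\tau)$ on the left, since \eqref{eq:dtR} is stated that way after dividing by $\tau^2$. For the inner parts, $\tau^2\d_t(\tau^{-1}J_{\mathcal{I},-1}+J_{\mathcal{I},0}+\tau J_{\mathcal{I},1})=\tau^{-1}\d_sJ_{\mathcal{I},-1}+\d_sJ_{\mathcal{I},0}+\tau\d_sJ_{\mathcal{I},1}$. Then replace each $\d_sJ_{\mathcal{I},k}$ using \eqref{eq:innereq_J}.

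Together with the damping $-\tilde J_\tau$, the $J_{\mathcal{I},k}$ terms cancel. This leaves the inner convective contributions, with $\rho_{\mathcal{I},0}=\rho_0$ and $J_{\mathcal{I},-1}=e^{-s}J_0$, and the inner consistent-momentum terms $\bar J(\rho_0)+\tau\mathcal{A}(\rho_0,\rho_{\mathcal{I},1})$. The damping of the matching part is $P_{J,0}+\tau P_{J,1}=\bar J(\rho_0)-\tau\mathcal{A}(\rho_0,\diver J_0)$. Adding these and using $\rho_{\mathcal{I},1}=(e^{-s}-1)\diver J_0$ with linearity of $\mathcal{A}$ in its second slot gives $\tau e^{-s}\mathcal{A}(\rho_0,\diver J_0)$.

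On the other side, expand $\bar J(\rho_\tau)$ via \eqref{eq:expdbarJ} around $\rho_0\to\bar\rho$ with first-order part $\rho_1=e^{-s}\diver J_0+\rho_{\mathcal{O},1}$, as read off from \eqref{eq:rho_aprx}. This gives
\[
\bar J(\bar\rho)+\tau\mathcal{A}(\bar\rho,\rho_{\mathcal{O},1})+\tau e^{-s}\mathcal{A}(\bar\rho,\diver J_0)+\tau^2\mathcal{B}.
\]
The first two terms cancel $J_{\mathcal{O},0}+\tau J_{\mathcal{O},1}$ from the damping. The third, paired with the leftover $\tau e^{-s}\mathcal{A}(\rho_0,\diver J_0)$ with the opposite sign, becomes after division by $\tau^2$ the term $\tau^{-1}e^{-s}[\mathcal{A}(\bar\rho,\diver J_0)-\mathcal{A}(\rho_0,\diver J_0)]$. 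The inner convective terms, moved to the right and divided by $\tau^2$, form $\mathcal{G}_1$. Finally, $-\tau^2\mathcal{R}$ divided by $\tau^2$ is $-\mathcal{R}$.

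The main obstacle is bookkeeping rather than analysis: the signs and $s$-dependent coefficients in $\mathcal{G}_1$. These come from the inner convective terms taken from the second and third equations of \eqref{eq:innereq_J} with $J_{\mathcal{I},0}$ from \eqref{eq:J_I0} substituted. I would check each coefficient by inserting \eqref{eq:J_I0} and \eqref{eq:rho_I1} explicitly. I would also confirm that the $\tau^{-2}e^{-2s}$ term is the leading inner convection $\tau^{-2}\cdot\tau^2\diver(J_{\mathcal{I},-1}\otimes J_{\mathcal{I},-1}/\rho_0)\cdot\tau^{-2}$, kept because the full convective term remains on the left.
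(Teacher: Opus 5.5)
Your proposal is correct and follows essentially the same route as the paper. The paper derives \eqref{eq:dtr} by differentiating the approximate density and using \eqref{eq:innereq_rho}, \eqref{eq:0equ}, \eqref{eq:1equ}. It obtains \eqref{eq:dtR} by computing $\tau^2\d_t J_\tau$ twice — once through the inner ODEs \eqref{eq:innereq_J}, once through the momentum equation with $\bar J(\rho_\tau)$ expanded about $\bar\rho$ with first-order part $e^{-s}\diver J_0+\rho_{\mathcal{O},1}$ — and comparing; your bookkeeping of the $\mathcal{A}$-terms and of the inner convective terms that make up $\mathcal{G}_1$ checks out.
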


\begin{rem}
The leading singular term in $\mathcal{G}_1$ has only $L^1_t$ uniform integrability, which is not sufficient for the $\tau-$uniform estimate of $(r,\mathcal{R})$. Further analysis of such singular terms is provided in the next subsection.
\end{rem}

\begin{proof}
First, we have
\[
\d_t r=\tau^{-2}(\d_t\rho_\tau-\d_t\tilde \rho_\tau)=\tau^{-2}(-\diver J_\tau-\d_t\bar\rho+\tau^{-1}e^{-s}\diver J_0-\d_t\rho_{\mathcal{O},1}).
\]
By using \eqref{eq:aprx}, \eqref{eq:0equ} and \eqref{eq:1equ}, we obtain \eqref{eq:dtr}. 

Now we consider the equation of $\mathcal{R}$. By using \eqref{eq:aprx} and \eqref{eq:innereq_J}, we have
\begin{align*}
\tau^2\d_t J_\tau=&\tau^{-1}\d_s J_{\mathcal{I},-1}+\sum_{k=0}^1 \tau^k\left[\d_s J_{\mathcal{I},k}(s)+\tau^2\d_t J_{\mathcal{O},k}(t)\right]+\tau^4 \d_t\mathcal{R}\\
=&-\tau^{-1}J_{\mathcal{I},-1}-\diver\left(\frac{J_{\mathcal{I},-1}\otimes J_{\mathcal{I},-1}}{\rho_{\mathcal{I},0}}\right)+\bar J(\rho_{0})-J_{\mathcal{I},0}\\
&-\tau\diver\left[2\frac{(J_{\mathcal{I},-1}\otimes J_{\mathcal{I},0})^s}{\rho_{\mathcal{I},0}}-\frac{J_{\mathcal{I},-1}\otimes J_{\mathcal{I},-1}\rho_{\mathcal{I},1}}{\rho_{\mathcal{I},0}^2}\right]\\
&+\tau\mathcal{A}(\rho_{0},(e^{-s}-1)\diver J_0)-\tau J_{\mathcal{I},1}+\sum_{k=0}^1 \tau^{k+2}\d_t J_{\mathcal{O},k}+\tau^4 \d_t\mathcal{R}.
\end{align*}
By writing
\[
\rho_\tau=\bar\rho+\tau\eta_\tau,\quad \eta_\tau=e^{-s}\diver J_0+\rho_{\mathcal{O},1}(t)+\tau r,
\]
and using the expansion \eqref{eq:expdbarJ}, we have
\[
\bar J(\rho_\tau)=\bar J(t)+\tau\mathcal{A}(\bar\rho,e^{-s}\diver J_0)+\tau\mathcal{A}(\bar\rho,\rho_{\mathcal{O},1})+\tau^2\mathcal{B}(\rho_\tau).
\]
Then by using \eqref{eq:0equ} and \eqref{eq:1equ}, the momentum equation of $\d_t J_\tau$ is written as
\begin{align*}
\tau^2\d_tJ_\tau+\tau^2\diver \left(\frac{J_\tau\otimes J_\tau}{\rho_\tau}\right)=&\bar J(t)+\tau\mathcal{A}(\bar\rho,e^{-s}\diver J_0)+\tau\mathcal{A}(\bar\rho,\rho_{\mathcal{O},1})\\
&+\tau^2\mathcal{B}(\bar\rho,e^{-s}\diver J_0+\rho_{\mathcal{O},1},r)\\
&-\tau^{-1}J_{\mathcal{I},-1}-[(J_{\mathcal{I},0}-\bar J(\rho_0))+J_{\mathcal{O},0}]\\
&-\tau[(J_{\mathcal{I},1}-\mathcal{A}(\rho_0,\diver J_0))+J_{\mathcal{O},1}]-\tau^2\mathcal{R}\\
=&\tau\mathcal{A}(\bar\rho,e^{-s}\diver J_0)+\tau^2\mathcal{B}(\bar\rho,e^{-s}\diver J_0+\rho_{\mathcal{O},1},r)-\tau^{-1}J_{\mathcal{I},-1}\\
&-(J_{\mathcal{I},0}-\bar J(\rho_0))-\tau (J_{\mathcal{I},1}-\mathcal{A}(\rho_0,\diver J_0))-\tau^2\mathcal{R}.
\end{align*}
Comparing the two expressions of $\tau^2\d_tJ_\tau$, we obtain the equation of $\mathcal{R}$ as \eqref{eq:dtR}.

\end{proof}

\subsection{Modified momentum remainder $\overline{\mathcal{R}}$ and the corresponding equations}

We see that equations \eqref{eq:dtr} and \eqref{eq:dtR} are formally similar to the rescaled QHD \eqref{eq:QHD_rs_intro}, therefore we can expect $(r,\mathcal{R})$ to satisfy some estimates analogous to those of $(\rho_\tau,J_\tau)$, for example the "energy" estimates
\[
(\nabla r,\tau\mathcal{R})\in L^\infty_tL^2_x,\quad \mathcal{R}\in L^2_{t,x}.
\]
Due to the existence of initial layer, we should pay more attention to the time-integrability of the source terms in  \eqref{eq:dtr} and \eqref{eq:dtR}. In particular in $\mathcal{G}_1$, the term with fast time multiplier $\tau^{-2}e^{-2s}$, $s=\tau^{-2}t$, only provides $L^1$ uniform integrability in $t$. Such integrability will not allow us to obtain an estimate of $\mathcal{R}$ uniform in $\tau$.  Therefore, we need to treat more carefully the trouble terms in \eqref{eq:dtR}.

Since our goal is to establish the $L^2_{t,x}$ estimate of $\mathcal{R}$ uniformly in $\tau$, to simplify the analysis it is more convenient to include all other terms that already have $L^2_{t,x}$ integrability in a new definition of the momentum remainder, namely
\begin{equation}\label{eq:alter_R}
\overline{\mathcal{R}}=\tau^{-1}(J_{\mathcal{I},1}-P_{J,1})(s)+\mathcal{R}.
\end{equation}

\begin{prop}
Equations of the density remainder $r$ and the modified momentum remainder $\overline{\mathcal{R}}$ are given by
\begin{equation}\label{eq:dtr2}
\d_t r-\diver \mathcal{S}_\rho+\diver\overline{\mathcal{R}}=0,
\end{equation}
and
\begin{equation}\label{eq:dtR2}
\begin{aligned}
\tau^2\d_t\overline{\mathcal{R}}+\tau^2\diver\left(\frac{J_\tau\otimes\overline{\mathcal{R}}}{\rho_\tau}\right)+&\tau^2\diver\left[\frac{\overline{\mathcal{R}}\otimes(\tau^{-1}e^{-s}J_0+\xi_\tau)}{\rho_\tau}\right]\\
=&\mathcal{S}_J+\mathcal{B}(\bar\rho,e^{-s}\diver J_0+\rho_{\mathcal{O},1},r)-\overline{\mathcal{R}},
\end{aligned}
\end{equation}
where
\begin{equation}\label{eq:S_rho}
\mathcal{S}_\rho=\tau^{-2}e^{-s}\left[\bar J(\rho_0)+(1-e^{-s})\diver\left(\frac{J_0\otimes J_0}{\rho_0}\right)\right].
\end{equation}
and
\begin{equation}\label{eq:S2}
\begin{aligned}
\mathcal{S}_J=&\tau^{-1} e^{-s}\mathcal{A}(\bar\rho,\diver J_0)+\tau^{-2}e^{-2s}\diver\left[(J_0\otimes J_0)(\rho_0^{-1}-\rho_\tau^{-1})\right]\\
&-\diver\left[\frac{2\tau^{-1}e^{-s}\xi_\tau\otimes J_0+\xi_\tau\otimes \xi_\tau}{\rho_\tau}\right]-\sum_{k=0}^1 \tau^{k}\d_t J_{\mathcal{O},k}.
\end{aligned}
\end{equation}
\end{prop}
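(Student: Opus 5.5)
The plan is to derive \eqref{eq:dtr2} and \eqref{eq:dtR2} from the previous proposition, i.e.\ from \eqref{eq:dtr} and \eqref{eq:dtR}, by substituting $\mathcal{R}=\overline{\mathcal{R}}-\tau^{-1}(J_{\mathcal{I},1}-P_{J,1})(s)$ and using the explicit inner solutions \eqref{eq:I_-1}--\eqref{eq:J_I1} together with the matching data \eqref{eq:ini_outer}.

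\textbf{Density equation.} From \eqref{eq:dtr} we have
\[
\d_t r=-\tau^{-2}\diver(J_{\mathcal{I},0}-P_{J,0})-\tau^{-1}\diver(J_{\mathcal{I},1}-P_{J,1})-\diver\mathcal{R}.
\]
The last two terms combine exactly into $-\diver\overline{\mathcal{R}}$ by \eqref{eq:alter_R}. By \eqref{eq:J_I0} and $P_{J,0}=\bar J(\rho_0)$,
\[
J_{\mathcal{I},0}-P_{J,0}=-e^{-s}\Big[\bar J(\rho_0)+(1-e^{-s})\diver\Big(\frac{J_0\otimes J_0}{\rho_0}\Big)\Big]=-\tau^2\mathcal{S}_\rho .
\]
This gives \eqref{eq:dtr2} directly.

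\textbf{Momentum equation.} I compute $\tau^2\d_t\overline{\mathcal{R}}=\tau^2\d_t\mathcal{R}+\tau^{-1}\d_s J_{\mathcal{I},1}$, using $\d_t=\tau^{-2}\d_s$ on functions of $s$. By the third equation of \eqref{eq:innereq_J}, $\d_sJ_{\mathcal{I},1}+J_{\mathcal{I},1}-P_{J,1}$ equals $(\mathcal{A}(\rho_0,\rho_{\mathcal{I},1})-P_{J,1})$ minus the divergence of the inner quadratic terms. Since $\mathcal{A}$ is linear in its second argument, $\rho_{\mathcal{I},1}=(e^{-s}-1)\diver J_0$ and $P_{J,1}=-\mathcal{A}(\rho_0,\diver J_0)$, this reduces to $e^{-s}\mathcal{A}(\rho_0,\diver J_0)$.

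Adding $\tau^{-1}\d_sJ_{\mathcal{I},1}$ to \eqref{eq:dtR} therefore does three things:
\begin{itemize}
\item it turns $-\mathcal{R}$ into $-\overline{\mathcal{R}}$;
\item it cancels the $-\tau^{-1}e^{-s}\mathcal{A}(\rho_0,\diver J_0)$ term, leaving $\tau^{-1}e^{-s}\mathcal{A}(\bar\rho,\diver J_0)$ as in $\mathcal{S}_J$;
\item it cancels the $\tau^{-1}$ parts of $\mathcal{G}_1$, which are precisely $\tau^{-1}$ times the inner quadratic divergence. This is checked by inserting $J_{\mathcal{I},-1}=e^{-s}J_0$ and $J_{\mathcal{I},0}$ from \eqref{eq:J_I0}.
\end{itemize}
The leftover $\tau^{-2}e^{-2s}\diver(J_0\otimes J_0/\rho_0)$ is kept and combined with the convective term below.

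\textbf{Convective term.} Decompose $J_\tau=\tau^{-1}e^{-s}J_0+\xi_\tau$, where now $\xi_\tau$ collects $J_{\mathcal{I},0}-P_{J,0}+J_{\mathcal{O},0}+\tau(J_{\mathcal{I},1}-P_{J,1}+J_{\mathcal{O},1})+\tau^2\mathcal{R}$, rewritten in terms of $\overline{\mathcal{R}}$. Expanding $\diver(J_\tau\otimes J_\tau/\rho_\tau)$ bilinearly gives three kinds of pieces:
\begin{itemize}
\item the $\tau^{-2}e^{-2s}J_0\otimes J_0/\rho_\tau$ piece, which combined with the leftover from $\mathcal{G}_1$ produces $\tau^{-2}e^{-2s}\diver[(J_0\otimes J_0)(\rho_0^{-1}-\rho_\tau^{-1})]$;
\item the pieces linear in $\overline{\mathcal{R}}$, which are moved to the left-hand side of \eqref{eq:dtR2};
\item the remaining cross and quadratic terms in $\xi_\tau$, which form the bracket in $\mathcal{S}_J$.
\end{itemize}
The term $\sum_k\tau^k\d_tJ_{\mathcal{O},k}$ is simply moved to the right-hand side.

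\textbf{Main obstacle.} The difficulty is bookkeeping rather than analysis. I must verify that every $\tau^{-1}$ contribution from $\mathcal{G}_1$ is exactly cancelled by $\tau^{-1}\d_sJ_{\mathcal{I},1}$, including the symmetrization and the factor $2$ in $(J_{\mathcal{I},-1}\otimes J_{\mathcal{I},0})^s$. I must also fix a consistent meaning of $\xi_\tau$ so that the $\overline{\mathcal{R}}$-linear terms and $\mathcal{S}_J$ match the stated form. I would do this by writing every term with its explicit power of $\tau$ and its factor $e^{-ks}$, and matching coefficients term by term.
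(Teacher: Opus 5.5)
Your proposal is correct and is essentially the paper's argument. The paper recomputes the $\mathcal{R}$-equation with $\tau(J_{\mathcal{I},1}-P_{J,1})$ absorbed into the remainder; this is algebraically equivalent to your step of adding $\tau^{-1}\d_sJ_{\mathcal{I},1}$ to \eqref{eq:dtR} and invoking the third inner ODE. Both routes reach the same intermediate identity (with the leftover $\tau^{-2}e^{-2s}\diver(J_0\otimes J_0/\rho_0)$) and then split the convective term in the same way.

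On the bookkeeping point you flagged, the consistent choice is the $\overline{\mathcal{R}}$-free one used in the paper's \eqref{eq:xi}: $J_\tau=\tau^{-1}e^{-s}J_0+\xi_\tau+\tau^2\overline{\mathcal{R}}$ with $\xi_\tau=(J_{\mathcal{I},0}-P_{J,0})+\bar J+\tau J_{\mathcal{O},1}$. Your literal definition includes $\tau^2\overline{\mathcal{R}}$ in $\xi_\tau$, which would count the $\overline{\mathcal{R}}$-terms twice, once on the left-hand side and again inside $\mathcal{S}_J$.
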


\begin{proof}
By using the modified momentum remainder $\overline{\mathcal{R}}$, it is straightforward to deduce the equation of density remainder $r$ from \eqref{eq:dtr} as \eqref{eq:dtr2}.

The equation of $\overline{\mathcal{R}}$ is computed exactly as the one of $\mathcal{R}$, but ignoring all the terms related to $[J_{\mathcal{I},1}-P_{J,1}]$, thus we obtain
\begin{align*}
\tau^2\d_t\overline{\mathcal{R}}+\sum_{k=0}^1 \tau^{k}\d_t J_{\mathcal{O},k}&+\diver\left(\frac{J_\tau\otimes J_\tau}{\rho_\tau}\right)=\tau^{-1} e^{-s}\mathcal{A}(\bar\rho,\diver J_0)\\
&+\mathcal{B}(\bar\rho,e^{-s}\diver J_0+\rho_{\mathcal{O},1},r)
+\tau^{-2}e^{-2s}\diver\left(\frac{J_0\otimes J_0}{\rho_0}\right)-\overline{\mathcal{R}},
\end{align*}
It remains to discuss how to deal with the terms which only have $L^1_t$ integrability. For this purpose, by using \eqref{eq:aprx} we can rewrite
\[
J_\tau=\tau^{-1}e^{-s}J_0+\xi_\tau+\tau^2\overline{\mathcal{R}},
\]
where
\begin{equation}\label{eq:xi}
\xi_\tau=-e^{-s}\left[\bar J(\rho_0)+(1-e^{-s})\diver\left(\frac{J_0\otimes J_0}{\rho_0}\right)\right]+\bar J(t)+\tau J_{\mathcal{O},1}.
\end{equation}
Then we can write the advection term as
\begin{align*}
\diver\left(\frac{J_\tau\otimes J_\tau}{\rho_\tau}\right)=&\diver\left[\frac{J_\tau\otimes(\tau^{-1}e^{-s}J_0+\xi_\tau)}{\rho_\tau}\right]+\tau^2\diver\left[\frac{J_\tau\otimes\overline{\mathcal{R}}}{\rho_\tau}\right]\\
=&\tau^2\diver\left(\frac{J_\tau\otimes\overline{\mathcal{R}}}{\rho_\tau}\right)+\tau^2\diver\left[\frac{\overline{\mathcal{R}}\otimes(\tau^{-1}e^{-s}J_0+\xi_\tau)}{\rho_\tau}\right]\\
&+\tau^{-2}e^{-2s}\diver\left(\frac{J_0\otimes J_0}{\rho_\tau}\right)+\diver\left[\frac{2\tau^{-1}e^{-s}\xi_\tau\otimes J_0+\xi_\tau \otimes\xi_\tau}{\rho_\tau}\right],
\end{align*}
where in this paper we defined the divergence of a non-symmetric matrix $A=(A_{j,k})$ as
\[
\diver A=(\d_{x_j}A_{j,k}),
\]
and the Einstein's convention of summation is always applied over duplicated indexes. 
Thus the equation of $\overline{\mathcal{R}}$ is obtained as \eqref{eq:dtR2}.
\end{proof}

The force term $\mathcal{S}_\rho$ in \eqref{eq:dtr2} only has uniform $L^1$ integrability in time. However it sufficient to perform our analysis for $\nabla r$, since we expect $\nabla r\in L^\infty_t$. While $\mathcal{S}_J$ is expected to be $L^2$ integrable in time. Let us consider \cite{AMZ3} Theorem 2 (1.20) or \cite{JLM} Lemma 4.3, then we expect that $\d_t\rho_\tau=-\diver J_\tau\in L^2_tH^\beta_x$ ($\beta$ depending on the initial regularity). In the case of strictly positive mass densities  ($\rho_\tau\geq\delta>0$), it follows that
\[
|\rho_0^{-1}-\rho_\tau^{-1}|\leq\int_0^t \frac{|\d_t \rho_\tau|}{\rho_\tau^2}(t_1)dt_1\leq \frac{\sqrt{t}}{\delta^2}\|\d_t\rho_\tau\|_{L^2_t}.
\]
Hence the behavior of leading singularity is no longer $\tau^{-2}e^{-2t/\tau^2}$, but is improved into $\tau^{-2}e^{-2t/\tau^2}\sqrt{t}$, which is $L^2_t$-bounded uniformly with respect to $\tau>0$. Therefore $S_J\in L^2_tH^{\beta'}_x$ uniformly in $\tau$, under suitable regularity assumption on the initial data. Next section is devoted to implement the program of estimating $\mathcal{S}_J$.

\section{Energy Estimates of $(r,\overline{\mathcal{R}})$}\label{sect:en_remainder}

In this section we will establish the uniform energy estimates on  the density remainder $r$ and the alternative momentum remainder $\overline{\mathcal{R}}$, under sufficient regularity assumption on the initial data $(\rho_0,J_0)$ and solutions $(\rho_\tau,J_\tau,\bar\rho,\rho_{\mathcal{O},1})$. We underline again that in this section we will not provide new results concerning the existence of solutions with high order regularity, but we will confine ourselves to identify a framework where the initial layer analysis is possible. More precisely, we will make the following regularity assumptions with $m\geq 8$.

\begin{assumption}[Regularity assumption]\label{asmp:RA}
Assume that on the time interval $[0,T_0]$, the following conditions hold.
\begin{itemize}
\item[(1)] Positivity of densities: there exists a $\delta>0$ such that
\[
\inf_{t,x}\,\{\rho_0,\rho_\tau,\bar\rho\}\geq\delta >0.
\]

\item[(2)] Irrotationality: the initial velocity $v_0=\frac{J_0}{\rho_0}$ satisfies
\[
(\nabla v_0)^a=\frac12\left(\nabla v_0-\nabla v_0^T\right)=0.
\]

\item[(3)] Uniform Sobolev regularity: there exists a $M>0$ such that
\begin{align*}
\max&\left\{\|\rho_0\|_{H^{m+1}_x},\|J_0\|_{H^{m}_x},\|\rho_\tau\|_{L^\infty_x H^{m+1}_x},\right.\\
&\hspace{0.3cm}\left.\|J_\tau\|_{L^\infty_x H^{m}_x},\|\bar\rho\|_{L^\infty_x H^{m+1}_x},\|\rho_{\mathcal{O},1}\|_{L^\infty_x H^{m-1}_x}\right\}\leq M.
\end{align*}
\end{itemize}
\end{assumption}

\begin{rem}
Results similar to Assumption \ref{asmp:RA} for $(\rho_\tau,J_\tau)$ and $\bar\rho$ are proved in the literature, provided the positivity and Sobolev bounds assumptions on the initial data $(\rho_0,J_0)$ and various kinds of additional conditions. For example, under smallness assumption such results for $\rho_\tau$ were proven in \cite{JP,LM}, and the corresponding regularity of $\bar\rho$ was provided as a result of the time-relaxation limit \cite{JLM}.

\end{rem}

Given the regularity assumption, the main goal of this section is to obtain uniform energy estimate for  $(r,\overline{\mathcal{R}})$. It will be important to use the following normalized energy functional
\begin{equation}\label{eq:en_r}
E_\tau(r,\overline{\mathcal{R}})=\int_{\T^d} \frac{1}{2\rho_\tau}(\tau^2|\overline{\mathcal{R}}|^2+|\nabla r|^2)dx.
\end{equation}
The following proposition is conserved with the time derivative of $E_\tau(r,\overline{\mathcal{R}})$.

\begin{prop}
Provided regularity Assumption \ref{asmp:RA}, the energy functional $E_\tau(r,\overline{\mathcal{R}})$ can be differentiable in time along the trajectory, and we have 
\begin{equation}\label{eq:dtEr}
\begin{aligned}
\frac{d}{dt}E_\tau(r,\overline{\mathcal{R}})+\int_{{\T^d}} \frac{|\overline{\mathcal{R}}|^2}{\rho_\tau}dx=&\int_{{\T^d}} \frac{\overline{\mathcal{R}}}{\rho_\tau}\cdot\mathcal{S}_J(t,x)dx-\frac{\tau^2}{2}\int_{\T^d}\frac{|\overline{\mathcal{R}}|^2}{\rho_\tau^2}\diver(\tau^{-1}e^{-s}J_0+\xi_\tau)dx\\
&+\int_{\T^d}\frac{\overline{\mathcal{R}}}{\rho_\tau}\cdot\left[-\tau^{-1} e^{-s}\left(v_0\otimes\nabla\left(\frac{\rho_0}{\rho_\tau}\right)\right)^a+\mathcal{S}_{rot}\right]\cdot(\tau^{-1}e^{-s}J_0+\xi_\tau)dx\\
&-\int_{\T^d}\frac{|\nabla r|^2}{2\rho_\tau^2}\d_t\rho_\tau dx+\int_{\T^d} \frac{\nabla\diver \mathcal{S}_\rho}{\rho_\tau}\cdot\nabla rdx\\
&+\int_{\T^d}\frac{\overline{\mathcal{R}}}{\rho_\tau}\cdot[\mathcal{B}_2(r,\nabla r)+\mathcal{B}_3(\nabla r)]dx,
\end{aligned}
\end{equation}
where $\mathcal{S}_{rot}$, $\mathcal{S}_J$, $\xi_\tau$, $\mathcal{S}_\rho$, $\mathcal{B}_2(r,\nabla r)$ and $\mathcal{B}_3(\nabla r)$ are given by \eqref{eq:S_rho}, \eqref{eq:S2}, \eqref{eq:xi}, \eqref{eq:S_curl}, \eqref{eq:B2} and \eqref{eq:B3} respectively.
\end{prop}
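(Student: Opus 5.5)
The identity \eqref{eq:dtEr} will come from a direct computation: differentiate $E_\tau$ in time, substitute the remainder equations \eqref{eq:dtr2}--\eqref{eq:dtR2}, and integrate by parts on $\T^d$. Assumption \ref{asmp:RA} supplies enough Sobolev regularity of $\rho_\tau,J_\tau,\bar\rho,\rho_{\mathcal{O},1}$ and of the data. Hence $r\in C_tH^1_x$ and $\tau\overline{\mathcal{R}}\in C_tL^2_x$, with time derivatives in the dual spaces paired below. This makes $E_\tau$ absolutely continuous and legitimizes every integration by parts. If needed, one can mollify in $x$ and pass to the limit.

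\emph{Step 1: the $\nabla r$ part.} Differentiating the weight gives the term $-\int \frac{|\nabla r|^2}{2\rho_\tau^2}\d_t\rho_\tau$. Then take the gradient of \eqref{eq:dtr2}, namely $\d_t\nabla r=\nabla\diver\mathcal{S}_\rho-\nabla\diver\overline{\mathcal{R}}$, and test it against $\nabla r/\rho_\tau$. The $\mathcal{S}_\rho$ part is kept as is. The part $-\int\frac{\nabla r}{\rho_\tau}\cdot\nabla\diver\overline{\mathcal{R}}$ is kept and will be cancelled in Step 2.

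\emph{Step 2: the $\tau^2|\overline{\mathcal{R}}|^2$ part.} Test \eqref{eq:dtR2} against $\overline{\mathcal{R}}/\rho_\tau$. The damping $-\overline{\mathcal{R}}$ yields $-\int|\overline{\mathcal{R}}|^2/\rho_\tau$, and $\mathcal{S}_J$ yields its term directly. For $\mathcal{B}$, use the decomposition \eqref{eq:opB} and separate the top-order linear piece $\frac14\nabla\triangle r$ from the rest. The rest is written as $\mathcal{B}_2(r,\nabla r)+\mathcal{B}_3(\nabla r)$. The top-order piece is $\frac14\nabla\triangle r$, possibly combined with the $-\frac14\diver[2(\nabla\bar\rho\otimes\nabla r)^s/\rho]$ term into $\frac12\rho\nabla(\triangle\sqrt\cdot/\sqrt\cdot)$-type form. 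After integration by parts, and together with the weight $1/\rho_\tau$, it cancels the cross term from Step 1; the commutator terms $\nabla(1/\rho_\tau)$ are absorbed into $\mathcal{B}_2,\mathcal{B}_3$. This cancellation is exactly why the energy uses $|\nabla r|^2/\rho_\tau$ with matched weights. The weight derivative $\tau^2\d_t(1/2\rho_\tau)|\overline{\mathcal{R}}|^2$, combined with the transport terms, must be handled as follows.

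\emph{Step 3: the convective terms (main obstacle).} Write $w=\tau^{-1}e^{-s}J_0+\xi_\tau$ and recall $J_\tau=w+\tau^2\overline{\mathcal{R}}$. The term $\tau^2\int\frac{\overline{\mathcal{R}}}{\rho_\tau}\cdot\diver(\frac{J_\tau\otimes\overline{\mathcal{R}}}{\rho_\tau})$ is integrated by parts into $\frac{\tau^2}{2}\int |\overline{\mathcal{R}}|^2\diver(J_\tau/\rho_\tau^2)$-type terms. Using $\d_t\rho_\tau=-\diver J_\tau$, these cancel with the weight derivative up to the displayed $-\frac{\tau^2}{2}\int\frac{|\overline{\mathcal{R}}|^2}{\rho_\tau^2}\diver w$. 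The delicate term is $\tau^2\int\frac{\overline{\mathcal{R}}}{\rho_\tau}\cdot\diver(\frac{\overline{\mathcal{R}}\otimes w}{\rho_\tau})$, since $w$ carries the singular factor $\tau^{-1}e^{-s}$. Split $\nabla(w/\rho_\tau)$ into its symmetric and antisymmetric parts. The symmetric part, paired with $\overline{\mathcal{R}}\otimes\overline{\mathcal{R}}$, is rewritten via the product rule into the quadratic expression already accounted for. The antisymmetric part is where irrotationality \eqref{eq:irr} enters: $(\nabla v_0)^a=0$ leaves $\nabla(J_0/\rho_\tau)^a=\nabla(v_0\rho_0/\rho_\tau)^a=(v_0\otimes\nabla(\rho_0/\rho_\tau))^a$, up to transposition convention. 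The factor $\nabla(\rho_0/\rho_\tau)$ vanishes at $t=0$ and gains $\sqrt t$, as noted for $\mathcal{S}_J$. The remaining antisymmetric contribution from $\xi_\tau$ defines $\mathcal{S}_{rot}$, namely \eqref{eq:S_curl}, which is a gradient-type combination of $\bar J,\bar J(\rho_0),J_{\mathcal{O},1}$ with regular coefficients. Collecting everything yields \eqref{eq:dtEr}. The bookkeeping of this antisymmetric decomposition, which must produce exactly the factor $(\cdot)^a\cdot w$ instead of an uncontrolled $\tau^{-1}$ term, is the step I expect to require the most care.
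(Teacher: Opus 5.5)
Steps 1 and 2 follow the paper's route in outline. Step 3, however, has a genuine gap: you split the wrong gradient, and you never use the fact that actually removes the derivative of $\overline{\mathcal{R}}$.

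With the paper's convention $(\diver A)_k=\d_j A_{jk}$, one has $\diver(\overline{\mathcal{R}}\otimes w/\rho_\tau)=(\diver\overline{\mathcal{R}})\,w/\rho_\tau+(\overline{\mathcal{R}}\cdot\nabla)(w/\rho_\tau)$. In the second piece the antisymmetric part of $\nabla(w/\rho_\tau)$ vanishes identically against the symmetric tensor $\overline{\mathcal{R}}\otimes\overline{\mathcal{R}}$, so irrotationality cannot enter there. The first piece leaves $\tau^2\int\rho_\tau^{-2}(\overline{\mathcal{R}}\cdot w)\diver\overline{\mathcal{R}}\,dx$. This contains a derivative of $\overline{\mathcal{R}}$, is not controlled by $E_\tau$, and does not appear in \eqref{eq:dtEr}.

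The paper instead integrates by parts so that the derivative falls on $\overline{\mathcal{R}}/\rho_\tau$, and splits $\d_j(\overline{\mathcal{R}}_k/\rho_\tau)=\d_k(\overline{\mathcal{R}}_j/\rho_\tau)+[\d_j(\overline{\mathcal{R}}_k/\rho_\tau)-\d_k(\overline{\mathcal{R}}_j/\rho_\tau)]$.
\begin{itemize}
\item The first part gives exactly $-\frac{\tau^2}{2}\int\rho_\tau^{-2}|\overline{\mathcal{R}}|^2\diver w\,dx$. By contrast, the $J_\tau\otimes\overline{\mathcal{R}}$ term cancels the weight derivative exactly, with nothing left over.
\item The second part is $\tau^2\int(\nabla(\overline{\mathcal{R}}/\rho_\tau))^a:(\overline{\mathcal{R}}\otimes w)/\rho_\tau\,dx$, which still contains a derivative of $\overline{\mathcal{R}}$.
\end{itemize}
Under the opposite divergence convention, the same curl term shows up in the $J_\tau\otimes\overline{\mathcal{R}}$ term instead, so the issue is intrinsic.

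The missing idea is that irrotationality is \emph{propagated} by the rescaled QHD flow: $(\nabla(J_\tau/\rho_\tau))^a=0$ for all $t$, because the vorticity of $v_\tau$ obeys a linear transport--damping equation with zero initial data. Since $J_\tau=w+\tau^2\overline{\mathcal{R}}$, this gives $\tau^2(\nabla(\overline{\mathcal{R}}/\rho_\tau))^a=-\tau^{-1}e^{-s}(\nabla(J_0/\rho_\tau))^a-(\nabla(\xi_\tau/\rho_\tau))^a$. This eliminates the derivative of $\overline{\mathcal{R}}$ and produces a term linear in $\overline{\mathcal{R}}$ contracted with $w$. Only at that point is $(\nabla v_0)^a=0$ used, to rewrite the $J_0$ part as $(v_0\otimes\nabla(\rho_0/\rho_\tau))^a$; the $\xi_\tau$ part becomes $\mathcal{S}_{rot}$. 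Irrotationality of $v_0$ alone, used as an algebraic identity on $w$, cannot yield \eqref{eq:dtEr}.

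A secondary inaccuracy is in Step 2. Moving $1/\rho_\tau$ through $\nabla\triangle r$ produces commutators such as $\int\rho_\tau^{-2}\,\overline{\mathcal{R}}\cdot(I_d\triangle+\nabla^2)r\cdot\nabla\rho_\tau\,dx$. These are second order in $r$, so they cannot be absorbed into $\mathcal{B}_2$ or $\mathcal{B}_3$, which involve only $r$ and $\nabla r$. In the paper they cancel exactly against the second-order-in-$r$ part of $-\frac14\diver[(2(\nabla\bar\rho\otimes\nabla r)^s+\nabla\eta_\tau\otimes\nabla\eta_\tau)/\rho_\tau]$. That cancellation requires keeping the $\tau r$ contained in $\eta_\tau$, so that $\nabla\bar\rho+\tau\nabla\eta_\tau=\nabla\rho_\tau$ reproduces the same coefficient. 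Only the first-order leftovers make up $\mathcal{B}_3(\nabla r)$.
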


\begin{proof}

We start with equation \eqref{eq:dtR2} by multiplying \eqref{eq:dtR2} by $\overline{\mathcal{R}}/\rho_\tau$. For the advection term, we notice that
\begin{align*}
\tau^2\int_{\T^d} \frac{\overline{\mathcal{R}}}{\rho_\tau}\cdot\diver\left(\frac{J_\tau\otimes\overline{\mathcal{R}}}{\rho_\tau}\right)dx=&\tau^2\int_{\T^d} \frac{|\overline{\mathcal{R}}|^2}{\rho_\tau^2}\diver J_\tau dx+\frac{\tau^2}{2}\int_{\T^d} J_\tau\cdot\nabla\left(\frac{|\overline{\mathcal{R}}|^2}{\rho_\tau^2}\right)dx\\
=&-\frac{\tau^2}{2}\int_{\T^d} \frac{|\overline{\mathcal{R}}|^2}{\rho_\tau^2}\d_t\rho_\tau dx
\end{align*}
then we have
\begin{align*}
&\frac{\tau^2}{2}\frac{d}{dt}\int_{\T^d} \frac{|\overline{\mathcal{R}}|^2}{\rho_\tau}dx+\int_{\T^d} \frac{|\overline{\mathcal{R}}|^2}{\rho_\tau}dx\\
=&-\tau^2\int_{\T^d} \frac{\overline{\mathcal{R}}}{\rho_\tau}\cdot\diver\left[\frac{\overline{\mathcal{R}}\otimes(\tau^{-1}e^{-s}J_0+\xi_\tau)}{\rho_\tau}\right]dx+\int_{\T^d} \frac{\overline{\mathcal{R}}}{\rho_\tau}\cdot\mathcal{S}_J(t,x)dx\\
&+\int_{\T^d} \frac{\overline{\mathcal{R}}}{\rho_\tau}\cdot\mathcal{B}(\bar\rho,e^{-s}\diver J_0+\rho_{\mathcal{O},1},r)dx.
\end{align*}

The integral
\[
-\tau^2\int_{\T^d} \frac{\overline{\mathcal{R}}}{\rho_\tau}\cdot\diver\left[\frac{\overline{\mathcal{R}}\otimes(\tau^{-1}e^{-s}J_0+\xi_\tau)}{\rho_\tau}\right]dx
\]
can be computed by integrating by parts as
\begin{align*}
=&\tau^2\int_{\T^d} \d_j\left(\frac{\overline{\mathcal{R}}_k}{\rho_\tau}\right)\frac{\overline{\mathcal{R}}_j\otimes(\tau^{-1}e^{-s}J_0+\xi_\tau)_k}{\rho_\tau}dx\\
=&\tau^2\int_{\T^d} \d_k\left(\frac{\overline{\mathcal{R}}_j}{\rho_\tau}\right)\frac{\overline{\mathcal{R}}_j(\tau^{-1}e^{-s}J_0+\xi_\tau)_k}{\rho_\tau}dx\\
&+\tau^2\int_{\T^d} \left[\d_j\left(\frac{\overline{\mathcal{R}}_k}{\rho_\tau}\right)-\d_k\left(\frac{\overline{\mathcal{R}}_j}{\rho_\tau}\right)\right]\frac{\overline{\mathcal{R}}_j(\tau^{-1}e^{-s}J_0+\xi_\tau)_k}{\rho_\tau}dx\\
=&\frac{\tau^2}{2}\int_{\T^d} \nabla\left(\frac{|\overline{\mathcal{R}}|^2}{\rho_\tau^2}\right)\cdot(\tau^{-1}e^{-s}J_0+\xi_\tau) dx+\tau^2\int_{\T^d} \left(\nabla\frac{\overline{\mathcal{R}}}{\rho_\tau}\right)^a:\frac{\overline{\mathcal{R}}\otimes(\tau^{-1}e^{-s}J_0+\xi_\tau)}{\rho_\tau}dx\\
=&-\frac{\tau^2}{2}\int_{\T^d}\frac{|\overline{\mathcal{R}}|^2}{\rho_\tau^2}\diver(\tau^{-1}e^{-s}J_0+\xi_\tau)dx+\tau^2\int_{\T^d} \left(\nabla\frac{\overline{\mathcal{R}}}{\rho_\tau}\right)^a:\frac{\overline{\mathcal{R}}\otimes(\tau^{-1}e^{-s}J_0+\xi_\tau)}{\rho_\tau}dx,
\end{align*}
where $A^a=\frac12(A-A^T)$ is the anti-symmetric part of matrix $A\in\R^{d\times d}$. 
To treat the integral of the curl-matrix, we recall that the irrotaitonality of the velocity is preserved by the trajectory of \eqref{eq:QHD_rs_intro}, namely 
\[
(\nabla v_\tau)^a=\left(\nabla\frac{J_\tau}{\rho_\tau}\right)^a=0,
\]
by definition of $\overline{\mathcal{R}}$ and \eqref{eq:J_aprx}, it follows that
\begin{align*}
0=(\nabla v_\tau)^a=&\tau^{-1} e^{-s}\left(\nabla\frac{J_0}{\rho_\tau}\right)^a-e^{-s}\left[\nabla\frac{\bar J(\rho_0)}{\rho_\tau}+\frac{(1-e^{-s})}{\rho_\tau}\nabla\diver\left(\frac{J_0\otimes J_0}{\rho_0}\right)\right]^a\\
&+\left(\nabla\frac{\bar J(t)}{\rho_\tau}\right)^a+\tau \left(\nabla\frac{J_{\mathcal{O},1}}{\rho_\tau}\right)^a+\tau^2 \left(\nabla\frac{\overline{\mathcal{R}}}{\rho_\tau}\right)^a.
\end{align*}
Therefore
\begin{align*}
\tau^2 \left(\nabla\frac{\overline{\mathcal{R}}}{\rho_\tau}\right)^a
=-\tau^{-1} e^{-s}\left[\nabla\left(v_0\frac{\rho_0}{\rho_\tau}\right)\right]^a+\mathcal{S}_{rot}(t,x),
\end{align*}
where 
\begin{equation}\label{eq:S_curl}
\begin{aligned}
\mathcal{S}_{rot}(t,x)=&e^{-s}\left[\nabla\frac{\bar J(\rho_0)}{\rho_\tau}+\frac{(1-e^{-s})}{\rho_\tau}\nabla\diver\left(\frac{J_0\otimes J_0}{\rho_0}\right)\right]^a\\
&-\left(\nabla\frac{\bar J(t)}{\rho_\tau}\right)-\tau \left(\nabla\frac{J_{\mathcal{O},1}}{\rho_\tau}\right)
\end{aligned}
\end{equation}
and $\mathcal{S}_{rot}$ is uniformly bounded in $L^\infty_t$. For the first term in the right hand side of $\tau^2 \left(\nabla\frac{\overline{\mathcal{R}}}{\rho_\tau}\right)^a$, we further notice that by the irrotationality of $v_0$, we have
\[
\left[\nabla\left(v_0\frac{\rho_0}{\rho_\tau}\right)\right]=\left(v_0\otimes\nabla\left(\frac{\rho_0}{\rho_\tau}\right)\right)^a,
\]
thus we obtain that
\begin{align*}
&\tau^2\int_{\T^d} \left(\nabla\frac{\overline{\mathcal{R}}}{\rho_\tau}\right)^a:\frac{\overline{\mathcal{R}}\otimes(\tau^{-1}e^{-s}J_0+\xi_\tau)}{\rho_\tau}dx\\
=&\int_{\T^d}\frac{\overline{\mathcal{R}}}{\rho_\tau}\cdot\left[-\tau^{-1} e^{-s}\left(v_0\otimes\nabla\left(\frac{\rho_0}{\rho_\tau}\right)\right)^a+\mathcal{S}_{rot}\right]\cdot(\tau^{-1}e^{-s}J_0+\xi_\tau)dx.
\end{align*}

Now we consider the integral containing the operator $\mathcal{B}$. Recall from \eqref{eq:opB} that we have
\begin{align*}
&\mathcal{B}(\bar\rho,e^{-s}\diver J_0+\rho_{\mathcal{O},1},r)\\
=&\frac14\nabla\triangle r-\frac14\diver\left[\frac{2(\nabla\bar\rho \otimes\nabla r)^s+\nabla \eta_\tau\otimes \nabla \eta_\tau}{\rho_\tau}\right]+\mathcal{B}_2(r,\nabla r).
\end{align*}
where
\begin{equation}\label{eq:B2}
\begin{aligned}
\mathcal{B}_2(r,\nabla r)=&\frac14\diver\left[\frac{r\nabla \bar\rho\otimes\nabla \bar\rho }{\bar\rho^2}\right]-\frac{1}{2\tau^2}\diver\left[\nabla \bar\rho\otimes\nabla \bar\rho\int_0^{\tau \eta_\tau}\frac{\tau \eta_\tau-g}{(\bar\rho+g)^3}dg\right]\\
&+\frac12\diver\left[\frac{(\nabla \bar\rho\otimes\nabla(e^{-s}\diver J_0+\rho_{\mathcal{O},1}))^sr_\tau}{\bar\rho^2}\right]\\
&-\frac{1}{\tau}\diver\left[(\nabla \bar\rho\otimes\nabla(e^{-s}\diver J_0+\rho_{\mathcal{O},1}))^s\int_0^{\tau \eta_\tau}\frac{\tau \eta_\tau-g}{(\bar\rho+g)^3}dg\right]\\
&-\nabla [p'(\bar\rho)r]-\tau^{-2}\nabla\int_0^{\tau \eta_\tau} p''(\bar\rho+g)(\tau \eta_\tau-g)dg\\
&-r\nabla V(\bar\rho)-\bar\rho\nabla V(r)-\eta_\tau\nabla V(\eta_\tau)
\end{aligned}
\end{equation}
and
\[
\eta_\tau=e^{-s}\diver J_0+\rho_{\mathcal{O},1}+\tau r.
\]
We first treat the first two leading order terms in the right hand side of $\mathcal{B}$,
\begin{align*}
\int_{\T^d} \frac{\overline{\mathcal{R}}}{\rho_\tau}\cdot\nabla\triangle rdx-\int_{\T^d} \frac{\overline{\mathcal{R}}}{\rho_\tau}\cdot\diver\left[\frac{2(\nabla\bar\rho \otimes\nabla r)^s+\nabla \eta_\tau\otimes \nabla \eta_\tau}{\rho_\tau}\right]dx.
\end{align*}
By integrating by parts and \eqref{eq:dtr2}, we have
\begin{align*}
\int_{\T^d} \frac{\overline{\mathcal{R}}}{\rho_\tau}\cdot\nabla\triangle rdx=&-\int_{\T^d} \frac{\diver \overline{\mathcal{R}}}{\rho_\tau}\triangle rdx+\int_{\T^d} \frac{\overline{\mathcal{R}}}{\rho_\tau^2}\cdot\nabla\rho_\tau\triangle r dx\\
=&\int_{\T^d} \frac{\nabla\diver \overline{\mathcal{R}}}{\rho_\tau}\cdot\nabla rdx-\int_{\T^d} \frac{\diver \overline{\mathcal{R}}}{\rho_\tau^2}\nabla r\cdot\nabla\rho_\tau dx+\int_{\T^d} \frac{\overline{\mathcal{R}}}{\rho_\tau^2}\cdot\nabla\rho_\tau\triangle r  dx\\
=&-\frac{d}{dt}\int_{\T^d}\frac{|\nabla r|^2}{2\rho_\tau}dx-\int_{\T^d}\frac{|\nabla r|^2}{2\rho_\tau^2}\d_t\rho_\tau dx+\int_{\T^d} \frac{\nabla\diver \mathcal{S}_\rho}{\rho_\tau}\cdot\nabla rdx\\
&-2\int_{\T^d} \frac{\overline{\mathcal{R}}\cdot\nabla\rho_\tau}{\rho_\tau^3}\nabla r\cdot\nabla\rho_\tau dx+\int_{\T^d} \frac{\overline{\mathcal{R}}}{\rho_\tau^2}\cdot(I_d\triangle+\nabla^2)r\cdot\nabla\rho_\tau dx\\
&+\int_{\T^d} \frac{\overline{\mathcal{R}}}{\rho_\tau^2}\cdot\nabla^2\rho_\tau\cdot\nabla r dx,
\end{align*}
where $I_d$ is the $d-$dimensional identity matrix. By the expansion of $\rho_\tau$, we notice that
\begin{align*}
&-\int_{\T^d} \frac{\overline{\mathcal{R}}}{\rho_\tau}\cdot\diver\left[\frac{2(\nabla\bar\rho\otimes \nabla r)^s+\nabla \eta_\tau\otimes \nabla \eta_\tau}{\rho_\tau}\right]dx\\
=&\int_{\T^d} \nabla\rho_\tau\cdot\left[\frac{2(\nabla\bar\rho\otimes \nabla r)^s+\nabla \eta_\tau\otimes \nabla \eta_\tau}{\rho_\tau^2}\right]\cdot\frac{\overline{\mathcal{R}}}{\rho_\tau} dx\\
&-\int_{\T^d} \frac{\overline{\mathcal{R}}}{\rho_\tau}\cdot\frac{\triangle\bar\rho\nabla r+\triangle (e^{-s}\diver J_0+\rho_{\mathcal{O},1}+\tau r)\nabla\eta_\tau}{\rho_\tau}dx\\
&-\int_{\T^d} \frac{\overline{\mathcal{R}}}{\rho_\tau}\cdot\frac{\nabla\bar\rho\cdot\nabla^2 r+\nabla\eta_\tau\cdot\nabla^2(e^{-s}\diver J_0+\rho_{\mathcal{O},1}+\tau r)}{\rho_\tau}dx\\
&-\int_{\T^d} \frac{\overline{\mathcal{R}}}{\rho_\tau}\cdot\frac{\triangle r\nabla\bar\rho+\nabla r\cdot\nabla^2\bar\rho}{\rho_\tau}dx.
\end{align*}
Recall that $\rho_\tau=\bar\rho+\tau\eta_\tau$, then we can re-organize the integrands of the right hand side above as
\begin{align*}
=&\int_{\T^d} \nabla\rho_\tau\cdot\left[\frac{2(\nabla\bar\rho\otimes \nabla r)^s+\nabla \eta_\tau\otimes \nabla \eta_\tau}{\rho_\tau^2}\right]\cdot\frac{\overline{\mathcal{R}}}{\rho_\tau} dx\\
&-\int_{\T^d} \frac{\overline{\mathcal{R}}}{\rho_\tau}\cdot\frac{\nabla r\cdot(I_d\triangle+\nabla^2 )\bar\rho}{\rho_\tau}dx-\int_{\T^d} \frac{\overline{\mathcal{R}}}{\rho_\tau}\cdot\frac{\nabla \eta_\tau\cdot(I_d\triangle+\nabla^2)(e^{-s}\diver J_0+\rho_{\mathcal{O},1})}{\rho_\tau}dx\\
&-\int_{\T^d} \frac{\overline{\mathcal{R}}}{\rho_\tau}\cdot\frac{\nabla\rho_\tau\cdot(I_d\triangle+\nabla^2)r}{\rho_\tau}dx
\end{align*}
Summarizing the identities above and noticing that the integrals containing $(I_d\triangle+\nabla^2)r$ cancel, we obtain
\begin{align*}
&\int_{\T^d} \frac{\overline{\mathcal{R}}}{\rho_\tau}\cdot\nabla\triangle rdx-\int_{\T^d} \frac{\overline{\mathcal{R}}}{\rho_\tau}\cdot\diver\left[\frac{2(\nabla\bar\rho\otimes \nabla r)^s+\nabla \eta_\tau\otimes \nabla \eta_\tau}{\rho_\tau}\right]dx\\
=&-\frac{d}{dt}\int_{\T^d}\frac{|\nabla r|^2}{2\rho_\tau}dx-\int_{\T^d}\frac{|\nabla r|^2}{2\rho_\tau^2}\d_t\rho_\tau dx+\int_{\T^d} \frac{\nabla\diver \mathcal{S}_\rho}{\rho_\tau}\cdot\nabla rdx+\int_{\T^d}\frac{\overline{\mathcal{R}}}{\rho_\tau}\cdot\mathcal{B}_3(\nabla r)dx,
\end{align*}
where
\begin{equation}\label{eq:B3}
\begin{aligned}
\mathcal{B}_3(\nabla r)=&(\nabla\rho_\tau\cdot\nabla r)\nabla \rho_\tau+\nabla^2\rho_\tau\cdot\nabla r+(I_d\triangle+\nabla^2)\bar\rho\cdot\nabla r\\
&-\frac{\nabla \eta_\tau}{\rho_\tau}\cdot(I_d\triangle+\nabla^2)(e^{-s}\diver J_0+\rho_{\mathcal{O},1}),
\end{aligned}
\end{equation}
Summarizing the identities above, we obtain \eqref{eq:dtEr}.
\end{proof}

\begin{prop}
Under the regularity Assumption \ref{asmp:RA}, there exist $\mathcal{S}_j$, $j=1,2,3$, such that identity \eqref{eq:dtEr} can be bounded by
\begin{equation}\label{eq:estE}
\eqref{eq:dtEr}\leq \frac34\int_{\T^d} \frac{\overline{|\mathcal{R}}|^2}{\rho_\tau}dx+\mathcal{S}_1(t)E_\tau(r,\overline{\mathcal{R}})+\tau^2\mathcal{S}_2(t)E_\tau(r,\overline{\mathcal{R}})^\frac32+\mathcal{S}_3(t),
\end{equation}
where $\mathcal{S}_j$, $j=1,2,3$, satisfy
\begin{equation}\label{eq:prop8_1}
\begin{aligned}
\mathcal{S}_1(t)\leq  &(1+\tau^{-2}e^{-\frac{t}{\tau^2}})\\
&\cdot C(\delta,\|\rho_0\|_{H^{3+\frac{d}{2}}_x},\|J_0\|_{H^{3+\frac{d}{2}}},\|\rho_\tau\|_{H^{2+\frac{d}{2}}},\|J_\tau\|_{H^2_x},\|\bar \rho\|_{H^{4+\frac{d}{2}}_x},\|\rho_{\mathcal{O},1}\|_{H^{4+\frac{d}{2}}_x}),
\end{aligned}
\end{equation}
\begin{equation}\label{eq:prop8_2}
\mathcal{S}_2(t)\leq  C(\delta,\|J_0\|_{H^{2+\frac{d}{2}}_x},\|\bar\rho\|_{H^{2+\frac{d}{2}}_x},\|\rho_{\mathcal{O},1}\|_{H^{1+\frac{d}{2}}_x}),
\end{equation}
and
\begin{equation}\label{eq:prop8_3}
\begin{aligned}
\mathcal{S}_3(t)\leq & (1+\tau^{-4}e^{-\frac{4t}{\tau^2}}t+\tau^{-2}e^{-\frac{2t}{\tau^2}})\\
&\cdot C(\delta,\|\rho_0\|_{H^5_x},\|J_0\|_{H^{3+\frac{d}{2}}_x},\|\rho_\tau\|_{H^3_x},\|J_\tau\|_{L^2_tH^2_x},\|\bar\rho\|_{H^7_x},\|\rho_{\mathcal{O},1}\|_{H^7_x}).
\end{aligned}
\end{equation}
\end{prop}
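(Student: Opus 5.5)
The plan is to bound the right-hand side of \eqref{eq:dtEr} term by term. I will repeatedly use Cauchy--Schwarz and Young's inequality in the form $ab\le \epsilon a^2+C_\epsilon b^2$, with total weight $\frac34$ on $\int|\overline{\mathcal{R}}|^2/\rho_\tau$. The Sobolev embedding $H^{k+d/2}_x\hookrightarrow W^{k,\infty}_x$ (valid on $\T^d$, $d\le3$, up to an $\epsilon$ in the index) will put every coefficient built from $\rho_0,J_0,\rho_\tau,J_\tau,\bar\rho,\rho_{\mathcal{O},1}$ in $L^\infty_x$. The lower bound $\rho_\tau\ge\delta$ from Assumption \ref{asmp:RA} lets me pass between $\int|\nabla r|^2$, $\tau^2\int|\overline{\mathcal{R}}|^2$ and $E_\tau$. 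Each term will be sorted into one of four types: absorbable by $\frac34\int|\overline{\mathcal{R}}|^2/\rho_\tau$, linear in $E_\tau$ (contributing to $\mathcal{S}_1$), cubic ($E_\tau^{3/2}$, contributing to $\tau^2\mathcal{S}_2$), or a pure source (contributing to $\mathcal{S}_3$). The only $\tau$-singular factors are $\tau^{-1}e^{-s}$ and $\tau^{-2}e^{-s}$, and I must check that their powers come out exactly as in \eqref{eq:prop8_1}--\eqref{eq:prop8_3}.

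\textbf{Easy terms.} The term $-\int\frac{|\nabla r|^2}{2\rho_\tau^2}\d_t\rho_\tau$ is controlled by $\|\diver J_\tau\|_{L^\infty}E_\tau$, which goes into $\mathcal{S}_1$ with a regular coefficient. The term $-\frac{\tau^2}{2}\int\frac{|\overline{\mathcal{R}}|^2}{\rho_\tau^2}\diver(\tau^{-1}e^{-s}J_0+\xi_\tau)$ is at most $C(1+\tau^{-1}e^{-s})E_\tau$. For $\int\frac{\nabla\diver\mathcal{S}_\rho}{\rho_\tau}\cdot\nabla r$ I use \eqref{eq:S_rho}, which gives $\|\nabla\diver\mathcal{S}_\rho\|_{L^2}\le C\tau^{-2}e^{-s}$. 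Young's inequality in the form $\tau^{-2}e^{-s}\|\nabla r\|\le \tau^{-2}e^{-s}(E_\tau+1)$ then places this term in $\mathcal{S}_1$ with the factor $\tau^{-2}e^{-t/\tau^2}$, and in $\mathcal{S}_3$ with a factor $\tau^{-2}e^{-s}\le\tau^{-2}e^{-2s}+\tau^{-2}e^{-s}$. Here I would check whether the stated $\mathcal{S}_3$ weight suffices. If it does not, I put the whole contribution into $\mathcal{S}_1E_\tau$ plus an $L^1_t$ source; since $\int\tau^{-2}e^{-s}dt=1$, Gronwall still closes either way.

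For the $\mathcal{B}_3$ term, $\mathcal{B}_3$ is linear in $\nabla r$ plus a pure source. Writing $\nabla\eta_\tau=\nabla(e^{-s}\diver J_0+\rho_{\mathcal{O},1})+\tau\nabla r$, Young's inequality gives $\frac18\int|\overline{\mathcal{R}}|^2/\rho_\tau+CE_\tau+C$. For the $\mathcal{B}_2$ term, the linear pieces in $r$ and $\nabla V(r)$ are handled via Poincaré on the mean-zero $r$ (mass is conserved, so $\int r=0$) together with $\|r\|_{L^2}\le C\|\nabla r\|$. The Taylor remainders $\tau^{-2}\int_0^{\tau\eta_\tau}(\dots)dg$ are $O(\eta_\tau^2)$ and $O(\eta_\tau\nabla\eta_\tau)$ after taking the divergence. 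Expanding $\eta_\tau$, these produce regular sources, linear terms in $\nabla r$, and quadratic terms $\tau^2 r\nabla r$, $\tau^2|\nabla r|^2$. Paired with $\overline{\mathcal{R}}$, the quadratic terms give
\[
\tau^2\|\overline{\mathcal{R}}\|_{L^2}\|\nabla r\|_{L^4}^2 .
\]
I avoid $\|\nabla r\|_{L^\infty}$ by keeping the divergence inside and integrating by parts only where the $\nabla^2 r$ factor cancels. Alternatively, I use the regularity $\|\rho_\tau\|_{H^{m+1}}$ together with $\tau^2\nabla r=\nabla(\rho_\tau-\tilde\rho_\tau)$, which gives $\tau^2\|\nabla r\|_{L^\infty}\le C$. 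That bound downgrades one factor and yields $\tau^2\mathcal{S}_2E_\tau^{3/2}$ or better, with $\tau^2\|\overline{\mathcal{R}}\|\|\nabla r\|^2\le \tau\,(\tau^2\|\overline{\mathcal{R}}\|^2)^{1/2}\|\nabla r\|^2\lesssim \tau E^{3/2}$.

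\textbf{Main obstacle: the singular terms.} The hard part is the terms with $\tau^{-1}e^{-s}$ and $\tau^{-2}e^{-2s}$ weights: $\mathcal{S}_J$ and the rotational term. In $\mathcal{S}_J$ from \eqref{eq:S2}, the piece $\tau^{-1}e^{-s}\mathcal{A}(\bar\rho,\diver J_0)$ and the $\tau^{-1}e^{-s}\xi_\tau\otimes J_0$ term give $\frac18\int|\overline{\mathcal{R}}|^2/\rho_\tau+C\tau^{-2}e^{-2s}$, which fits in $\mathcal{S}_3$. The critical piece is $\tau^{-2}e^{-2s}\diver[(J_0\otimes J_0)(\rho_0^{-1}-\rho_\tau^{-1})]$. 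Following the remark before Section \ref{sect:en_remainder}, I bound
\[
\|\rho_0^{-1}-\rho_\tau^{-1}\|_{W^{1,\infty}}\le C\sqrt t\,\|\diver J_\tau\|_{L^2_tH^{2+d/2}},
\]
so that its square after Young is $C\tau^{-4}e^{-4s}t$, which is exactly the middle weight in \eqref{eq:prop8_3}.

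For the rotational integral, $\tau^{-1}e^{-s}(v_0\otimes\nabla(\rho_0/\rho_\tau))^a$ is multiplied by $\tau^{-1}e^{-s}J_0+\xi_\tau$. The worst product, $\tau^{-2}e^{-2s}$, carries the factor $\nabla(\rho_0/\rho_\tau)=O(\sqrt t)$ by the same argument, giving again $\tau^{-4}e^{-4s}t$ after Young. The $\mathcal{S}_{rot}$ part gives $C(1+\tau^{-2}e^{-2s})$. Finally, $\d_tJ_{\mathcal{O},k}$ is bounded through \eqref{eq:0equ}, \eqref{eq:1equ} and the $H^7$ norms of $\bar\rho,\rho_{\mathcal{O},1}$, which accounts for the highest-regularity requirements in $\mathcal{S}_3$. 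Summing all contributions with Young weights adding up to at most $\frac34$ gives \eqref{eq:estE}.
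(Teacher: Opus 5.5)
Your plan is the paper's argument: term-by-term Cauchy--Schwarz/Young against the dissipation $\int_{\T^d}|\overline{\mathcal{R}}|^2/\rho_\tau\,dx$, Sobolev embeddings for all coefficients, and the gain $\rho_0^{-1}-\rho_\tau^{-1}=\int_0^t\partial_t\rho_\tau/\rho_\tau^2=O(\sqrt t)$. That gain converts the $\tau^{-2}e^{-2s}$ pieces of $\mathcal{S}_J$ and of the rotational term into the weight $\tau^{-4}e^{-4s}t$. Two points need comment.

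\textbf{The $\mathcal{S}_\rho$ term.} Your doubt about $\int_{\T^d}\rho_\tau^{-1}\nabla\diver\mathcal{S}_\rho\cdot\nabla r\,dx$ is justified. This term is only bounded by $C\tau^{-2}e^{-s}E_\tau^{1/2}$. Any splitting $aE_\tau^{1/2}\le bE_\tau+c$ with $b\lesssim 1+\tau^{-2}e^{-s}$ forces $c\gtrsim\tau^{-2}e^{-s}$ in the layer. At $s=\log(1/\tau)$ this is $\tau^{-1}$, while $1+\tau^{-2}e^{-2s}+\tau^{-4}e^{-4s}t=O(1)$, and the term $\tau^2\mathcal{S}_2E_\tau^{3/2}$ does not help. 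The paper's \eqref{eq:estS_5} reaches $(1+\tau^{-2}e^{-s})CE_\tau$ only through a Cauchy--Schwarz inequality with the square roots dropped; moreover $\|\nabla\diver\mathcal{S}_\rho\|_{L^2_x}^2$ is of order $\tau^{-4}e^{-2s}$. So that step is not valid as written.

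Your fallback is the right repair: put $C\tau^{-2}e^{-s}E_\tau$ into $\mathcal{S}_1$ and add an extra source $C\tau^{-2}e^{-t/\tau^2}$ to $\mathcal{S}_3$. Since $\int_0^{T_0}\tau^{-2}e^{-t/\tau^2}dt\le1$, the Gronwall argument of Proposition \ref{prop:bd_en} goes through unchanged. You should say explicitly that you prove \eqref{eq:prop8_3} with this additional weight, not as stated.

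\textbf{The cubic terms.} Here you take a different route from the paper, which keeps them as $\tau^2\mathcal{S}_2E_\tau^{3/2}$ and closes by a bootstrap. Consider terms such as $\tau^2\int|\overline{\mathcal{R}}||r||\nabla r|$, or $\tau^2\int|\overline{\mathcal{R}}||\nabla r|^2$. The latter is produced by $\nabla\eta_\tau\otimes\nabla\eta_\tau$ in the derivation of \eqref{eq:dtEr}, though it is not recorded in \eqref{eq:B3}. Bounding them by a power of $E_\tau$ requires $\|r\|_{L^\infty_x}$ or $\|\nabla r\|_{L^4_x}$, which $E_\tau$ does not control.

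Your observation that $\tau^2r=\rho_\tau-\tilde\rho_\tau$ gives $\tau^2\|r\|_{W^{1,\infty}_x}\le C(M)$ under Assumption \ref{asmp:RA}. This turns all such terms into ones linear in $E_\tau$ plus a share of the dissipation, so you may even take $\mathcal{S}_2=0$ and drop the bootstrap. It is more robust than the paper's bookkeeping and uses nothing beyond what the assumption already provides.

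Commit to this device alone. Your side computation ending in $\tau E^{3/2}$ is weaker than the required $\tau^2E^{3/2}$, and it uses $\|\nabla r\|_{L^2_x}$ where $\|\nabla r\|_{L^4_x}$ is needed. The ``integrate by parts where $\nabla^2r$ cancels'' alternative is not worked out.
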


\begin{proof}
In this proof we will repetitively use the following embedding inequalities on $\T^d$,
\[
\|f\|_{L^4_x}\leq C\|f\|_{H^1_x},\quad \|f\|_{L^\infty_x}\leq C\|f\|_{H^{\frac{d}{2}}_x}.
\]
First, by Cauchy-Schwarz inequality we write
\[
\int_{\T^d} \frac{\overline{\mathcal{R}}}{\rho_\tau}\cdot\mathcal{S}_J(t,x)dx\leq \frac{1}{16}\int_{\T^d} \frac{|\overline{\mathcal{R}}|^2}{\rho_\tau}dx+4\int_{\T^d} |\mathcal{S}_J|^2dx,
\]
where the last term should be bounded by $\mathcal{S}_3(t)$. 
By definition \eqref{eq:S2} of $\mathcal{S}_J$, it follows that
\begin{equation}\label{eq:int_A}
\begin{aligned}
\int_{\T^d} |\mathcal{S}_J|^2dx\leq &  \tau^{-2}e^{-2s}\int_{\T^d} |\mathcal{A}(\bar\rho,\diver J_0)|^2 dx\\
&+\tau^{-4}e^{-4s}\int_{\T^d} |\diver[J_0\otimes J_0(\rho_0^{-1}-\rho_\tau^{-1})]|^2 dx\\
&+\int_{\T^d} \left|\diver\left(\frac{2\tau^{-1}e^{-s}\xi_\tau\otimes J_0+\xi_\tau\otimes \xi_\tau}{\rho_\tau}\right)\right|^2 dx\\
&+\int_{\T^d} |\d_tJ_{\mathcal{O},0}|^2dx+\tau^2\int_{\T^d} |\d_tJ_{\mathcal{O},1}|^2dx.
\end{aligned}
\end{equation}
Operator $\mathcal{A}(\cdot,\cdot)$ is given by \eqref{eq:opA}, then we have
\[
\int_{\T^d} |\mathcal{A}(\bar\rho,\diver J_0)|^2 dx\leq C(\delta,\|\bar\rho\|_{H^3_x},\|J_0\|_{H^4_x}).
\]
For the second integral in the right hand side of \eqref{eq:int_A}, noticing that $\rho_0$ is the initial data of $\rho_\tau$, we can write
\[
\rho_0^{-1}-\rho_\tau^{-1}=\int_0^t\frac{\d_s\rho_\tau}{\rho_\tau^2}ds.
\]
Thus by using $\d_s\rho_\tau=-\diver J_\tau$, it gives
\begin{align*}
\int_{\T^d} |\diver[J_0\otimes J_0(\rho_0^{-1}-\rho_\tau^{-1})]|^2 dx\leq & C(\delta,\|J_0\|_{H^{1+\frac{d}{2}}_x})\left(\int_0^t\|\d_s\rho_\tau\|_{H^1_x}ds\right)^2\\
\leq & C(\delta,\|J_0\|_{H^{1+\frac{d}{2}}_x},\|J_\tau\|_{L^2_tH^2_x})t.
\end{align*}
The third term in \eqref{eq:int_A} can be bounded by
\[
\int_{\T^d} \left|\diver\left(\frac{2\tau^{-1}e^{-s}\xi_\tau\otimes J_0+\xi_\tau\otimes \xi_\tau}{\rho_\tau}\right)\right|^2 dx\leq C(\delta,\|J_0\|_{H^2_x},\|\rho_\tau\|_{H^2_x},\|\xi_\tau\|_{H^2_x})(1+\tau^{-2}e^{-2s}).
\]
By using definition \eqref{eq:xi} of $\xi_\tau$, we have
\begin{equation}\label{eq:bd_xi}
\|\xi_\tau\|_{H^2_x}\leq C(\delta,\|\rho_0\|_{H^4_x},\|J_0\|_{H^2_x},\|\bar J\|_{H^2_x},\|J_{\mathcal{O},1}\|_{H^2_x}),
\end{equation}
and further by \eqref{eq:barJ} and \eqref{eq:1equ}, it follows that
\begin{align*}
\|\bar J\|_{H^2_x}\leq C(\delta,\|\bar\rho\|_{H^5_x}),\quad \|J_{\mathcal{O},1}\|_{H^2_x}\leq C(\delta,\|\rho_{\mathcal{O},1}\|_{H^5_x}).
\end{align*}
Last, by using \eqref{eq:0equ} and \eqref{eq:1equ}, we have
\begin{align*}
\int_{\T^d} |\d_tJ_{\mathcal{O},0}|^2dx= &\int_{\T^d} |\d_t\bar J(\bar\rho)|^2dx\\
\leq & C(\delta,\|\bar\rho\|_{H^3_x},\|\d_t\bar\rho\|_{H^3_x})\leq C(\delta,\|\bar\rho\|_{H^7_x}),
\end{align*}
and
\begin{align*}
\int_{\T^d} |\d_tJ_{\mathcal{O},1}|^2dx= &\int_{\T^d} |\d_t\mathcal{A}(\bar\rho,\rho_{\mathcal{O},1})|^2dx\\
\leq & C(\delta,\|\bar\rho\|_{H^3_x},\|\d_t\bar\rho\|_{H^3_x},\|\rho_{\mathcal{O},1}\|_{H^3_x},\|\d_t\rho_{\mathcal{O},1}\|_{H^3_x})\\
\leq & C(\delta,\|\bar\rho\|_{H^6_x},\|\rho_{\mathcal{O},1}\|_{H^7_x}).
\end{align*}
By collecting the estimates above, we obtain
\begin{equation}\label{eq:estS_1}
\begin{aligned}
\int_{\T^d} |\mathcal{S}_J|^2dx\leq & C(\delta,\|\rho_0\|_{H^4_x},\|J_0\|_{H^3_x},\|\rho_\tau\|_{H^2_x},\|J_\tau\|_{L^2_tH^2_x},\|\bar\rho\|_{H^7_x},\|\rho_{\mathcal{O},1}\|_{H^7_x})\\
&\cdot(1+\tau^{-4}e^{-4s}t+\tau^{-2}e^{-2s}).
\end{aligned}
\end{equation}

The second term in the right hand side of \eqref{eq:dtEr} is bounded by
\[
-\frac{\tau^2}{2}\int_{\T^d}\frac{|\overline{\mathcal{R}}|^2}{\rho_\tau^2}\diver(\tau^{-1}e^{-s}J_0+\xi_\tau)dx\leq E_\tau(r,\overline{\mathcal{R}})\|\rho_\tau^{-1}\diver(\tau^{-1}e^{-s}J_0+\xi_\tau)\|_{L^\infty_x},
\]
and this term is controlled by $\mathcal{S}_1(t) E_\tau(r,\overline{\mathcal{R}})$. 
By definition \eqref{eq:bd_xi} of $\xi_\tau$, we have
\begin{equation}\label{eq:estS_2}
\begin{aligned}
\|\rho_\tau^{-1}\diver(\tau^{-1}e^{-s}J_0+\xi_\tau)\|_{L^\infty_x}\leq &C(\delta,\|J_0\|_{H^{1+\frac{d}{2}}_x},\|\xi_\tau\|_{H^{1+\frac{d}{2}}_x})(1+\tau^{-1}e^{-s})\\
\leq & C(\delta,\|\rho_0\|_{H^{3+\frac{d}{2}}_x},\|J_0\|_{H^{1+\frac{d}{2}}_x},\|\bar \rho\|_{H^{4+\frac{d}{2}}_x},\|\rho_{\mathcal{O},1}\|_{H^{4+\frac{d}{2}}_x})\\
&\cdot(1+\tau^{-1}e^{-s}).
\end{aligned}
\end{equation}

Now we turn to the third term in the right hand side of \eqref{eq:dtEr}. We have
\begin{equation}\label{eq:bd_curl}
\begin{aligned}
&\int_{\T^d}\frac{\overline{\mathcal{R}}}{\rho_\tau}\cdot\left[-\tau^{-1} e^{-s}\left(v_0\otimes\nabla\left(\frac{\rho_0}{\rho_\tau}\right)\right)^a+\mathcal{S}_{rot}\right]\cdot(\tau^{-1}e^{-s}J_0+\xi_\tau)dx\\
\leq & \frac{1}{16}\int_{\T^d}\frac{|\overline{\mathcal{R}}|^2}{\rho_\tau}dx+C\tau^{-4}e^{-4s}\int_{\T^d}\frac{|J_0|^4}{\rho_\tau}\left|\nabla\left(\frac{\rho_0}{\rho_\tau}\right)\right|^2dx+C\tau^{-2}e^{-2s}\int_{\T^d}\frac{|\xi_\tau|^2}{\rho_\tau}\left|\nabla\left(\frac{\rho_0}{\rho_\tau}\right)\right|^2dx\\
&+C\tau^{-2}e^{-2s}\int_{\T^d}\frac{|J_0|^2}{\rho_\tau}|\mathcal{S}_{rot}|^2dx+C\int_{\T^d}\frac{|\xi_\tau|^2}{\rho_\tau}|\mathcal{S}_{rot}|^2dx,
\end{aligned}
\end{equation}
and it will bounded by $\mathcal{S}_3(t)$. 
Now we estimate the right hand side of \eqref{eq:bd_curl}. We first notice that
\[
\nabla\left(\frac{\rho_0}{\rho_\tau}\right)=\nabla\left(\frac{\rho_0-\rho_\tau}{\rho_\tau}\right)=\nabla\left(\frac{-\int_0^t\d_s\rho_\tau ds}{\rho_\tau}\right),
\]
therefore we have
\begin{align*}
\int_{\T^d}\frac{|J_0|^4}{\rho_\tau}\left|\nabla\left(\frac{\rho_0}{\rho_\tau}\right)\right|^2dx\leq & C(\delta, \|J_0\|_{H^\frac{d}{2}_x})\left(\int_0^t\|\d_s\rho_\tau\|_{H^1_x}ds\right)^2\\
\leq & C(\delta,\|J_0\|_{H^{\frac{d}{2}}_x},\|J_\tau\|_{L^2_tH^2_x})t.
\end{align*}
For  the third term in the right hand side of \eqref{eq:bd_curl}, we bound it by
\begin{align*}
\int_{\T^d}\frac{|\xi_\tau|^2}{\rho_\tau}\left|\nabla\left(\frac{\rho_0}{\rho_\tau}\right)\right|^2dx\leq & C(\delta,\|\rho_0\|_{H^2_x},\|\rho_\tau\|_{H^2_x},\|\xi_\tau\|_{H^1_x})\\
\leq & C(\delta,\|\rho_0\|_{H^3_x},\|J_0\|_{H^1_x},\|\rho_\tau\|_{H^2_x},\|\bar\rho\|_{H^4_x},\|\rho_{\mathcal{O},1}\|_{H^4_x}).
\end{align*}
Recalling \eqref{eq:S_curl} the definition of $\mathcal{S}_{rot}$, we have
\begin{align*}
\int_{\T^d}\frac{|J_0|^2}{\rho_\tau}|\mathcal{S}_{rot}|^2dx\leq & C(\delta,\|J_0\|_{H^1_x},\|\mathcal{S}_{rot}\|_{H^1_x}),
\end{align*}
and by \eqref{eq:S_curl}, it gives that
\begin{align*}
\|\mathcal{S}_{rot}\|_{H^1_x}\leq & C(\delta,\|J_0\|_{H^3_x},\|\rho_\tau\|_{H^3_x},\|\bar J(\rho_0)\|_{H^2_x},\|\bar J\|_{H^2_x},\|J_{\mathcal{O},1}\|_{H^2_x})\\
\leq & C(\delta,\|\rho_0\|_{H^5_x},\|J_0\|_{H^3_x},\|\rho_\tau\|_{H^3_x},,\|\bar\rho\|_{H^5_x},\|\rho_{\mathcal{O},1}\|_{H^5_x}).
\end{align*}
The last term of \eqref{eq:bd_curl} can be bounded by
\begin{align*}
\int_{\T^d}\frac{|\xi_\tau|^2}{\rho_\tau}|\mathcal{S}_{rot}|^2dx\leq &C(\delta,\|\xi\|_{H^1_x},\|\mathcal{S}_{rot}\|_{H^1_x})\\
\leq & C(\delta,\|\rho_0\|_{H^5_x},\|J_0\|_{H^3_x},\|\rho_\tau\|_{H^3_x},\|\bar\rho\|_{H^5_x},\|\rho_{\mathcal{O},1}\|_{H^5_x}).
\end{align*}
Thus, we obtain 
\begin{equation}\label{eq:estS_3}
\begin{aligned}
\eqref{eq:bd_curl}\leq & C(\delta,\|\rho_0\|_{H^5_x},\|J_0\|_{H^3_x},\|\rho_\tau\|_{H^3_x},\|J_\tau\|_{L^2_tH^2_x},\|\bar\rho\|_{H^5_x},\|\rho_{\mathcal{O},1}\|_{H^5_x})\\
&\cdot (1+\tau^{-4}e^{-4s}t+\tau^{-2}e^{-2s}).
\end{aligned}
\end{equation}

For the third line of the right hand side of \eqref{eq:dtEr}, we first have
\begin{equation}\label{eq:estS_4}
-\int_{\T^d}\frac{|\nabla r|^2}{2\rho_\tau^2}\d_t\rho_\tau dx\leq C(\delta,\|\d_t \rho_\tau\|_{H^1_x})E_\tau(r,\overline{\mathcal{R}})\leq C(\delta,\|J_\tau\|_{H^2_x})E_\tau(r,\overline{\mathcal{R}}).
\end{equation}
By definition \eqref{eq:S_rho}, we also have
\begin{equation}\label{eq:estS_5}
\begin{aligned}
\int_{\T^d} \frac{\nabla\diver \mathcal{S}_\rho}{\rho_\tau}\cdot\nabla rdx\leq & \int_{\T^d} \frac{|\nabla\diver \mathcal{S}_\rho|^2}{\rho_\tau}dx\int_{\T^d}\frac{|\nabla r|^2}{\rho_\tau}dx\\
\leq & (1+\tau^{-2}e^{-s})C(\delta,\|\bar J(\rho_0)\|_{H^2_x},\|J_0\|_{H^4_x})E_\tau(r,\overline{\mathcal{R}})\\
\leq & (1+\tau^{-2}e^{-s})C(\delta,\|\bar \rho_0\|_{H^5_x},\|J_0\|_{H^4_x})E_\tau(r,\overline{\mathcal{R}}).
\end{aligned}
\end{equation}

Now we collect the estimates related to the operator $\mathcal{B}_2$ and $\mathcal{B}_3$. We first give the control of
\begin{align*}
\int_{\T^d} \frac{\overline{\mathcal{R}}}{\rho_\tau}\cdot\mathcal{B}_2(r,\nabla r)dx
\end{align*}
where $\mathcal{B}_2(r,\nabla r)$ is defined by \eqref{eq:B2}. There are two type of terms in ${B}_2(r,\nabla r)$, which contribute to the source terms $\mathcal{S}_1(t)$ and $\mathcal{S}_2(t)$ respectively. First, we have
\begin{equation}\label{eq:ieB2_1}
\begin{aligned}
\int_{\T^d} \frac{\overline{\mathcal{R}}}{\rho_\tau}\cdot\diver\left[\frac{r\nabla \bar\rho\otimes\nabla \bar\rho }{\bar\rho^2}\right]dx=&\int_{\T^d} \frac{\overline{\mathcal{R}}}{\rho_\tau}\cdot\nabla r\left(\frac{\nabla \bar\rho\otimes\nabla \bar\rho }{\bar\rho^2}\right)dx\\
&+\int_{\T^d} \frac{\overline{\mathcal{R}}r}{\rho_\tau}\cdot\left[\frac{(\triangle I_d+\nabla^2) \bar\rho }{\bar\rho^2}-2\frac{|\nabla\bar\rho|^2}{\bar\rho^3}I_d\right]\cdot\nabla\bar\rho dx.
\end{aligned}
\end{equation}
By the definition of $r$, it is straightforward to check $\int_{\T^d} r\,dx=0$, then by Poincar\'e inequality we have
\[
\|r\|_{L^2_x}\leq C\|\nabla r\|_{L^2_x}.
\]
Thus by using property (c) of $\bar\rho$, we can control the right hand side of \eqref{eq:ieB2_1} by
\begin{align*}
\leq & C\int_{\T^d}\rho_\tau^{-1}(\overline{\mathcal{R}}^2+|\nabla r|^2)dx\left\{\|\frac{\nabla \bar\rho\otimes\nabla \bar\rho }{\bar\rho^2}\|_{L^\infty_x}\right.\\
&\hspace{1cm}\left.+\|\left[\frac{(\triangle I_d+\nabla^2) \bar\rho }{\bar\rho^2}-2\frac{|\nabla\bar\rho|^2}{\bar\rho^3}I_d\right]\cdot\nabla\bar\rho\|_{L^\infty_x}\right\}\\
\leq &  C(\delta,\|\bar\rho\|_{H^{2+\frac{d}{2}}}) E(r,\overline{\mathcal{R}}).
\end{align*}
Similarly we have
\begin{align*}
\int_{\T^d} \frac{\overline{\mathcal{R}}}{\rho_\tau}\cdot\diver\left[\frac{(\nabla \bar\rho\otimes\nabla(e^{-s}\diver J_0+\rho_{\mathcal{O},1}))^sr_\tau}{\bar\rho^2}\right]dx\\
\leq C(\delta,\|\bar\rho\|_{H^{2+\frac{d}{2}}},\|J_0\|_{H^{2+\frac{d}{2}}},\|\rho_{\mathcal{O},1}\|_{H^{1+\frac{d}{2}}})E(r,\overline{\mathcal{R}})
\end{align*}
and
\begin{align*}
&\int_{\T^d} \frac{\overline{\mathcal{R}}}{\rho_\tau}\cdot\left[-\nabla(p'(\bar\rho)r)-r\nabla V(\bar\rho)-\bar\rho\nabla V(r)\right]\leq C(\|\bar\rho\|_{H^{2+\frac{d}{2}}})E(r,\overline{\mathcal{R}}).
\end{align*}
Now we treat another type of terms in $\mathcal{B}_2(r,\nabla r)$, which contain quadratic terms of $(r,\nabla r)$. We take the integral
\[
-\frac{1}{2\tau^2}\int_{\T^d} \frac{\overline{\mathcal{R}}}{\rho_\tau}\cdot\diver\left[\nabla \bar\rho\otimes\nabla \bar\rho\int_0^{\tau \eta_\tau}\frac{\tau \eta_\tau-g}{(\bar\rho+g)^3}dg\right]dx
\]
as an example, and the other parts are controlled in a similar way. Recall that
\[
\eta_\tau=e^{-s}\diver J_0+\rho_{\mathcal{O},1}(t)+\tau r,
\]
then we have
\[
\int_0^{\tau \eta_\tau}\frac{\tau \eta_\tau-g}{(\bar\rho+g)^3}dg\leq C(\delta)\tau^2\eta_\tau^2\leq C(\delta)\tau^2[e^{-2s}(\diver J_0)^2+\rho_{\mathcal{O},1}^2+\tau^2 r^2]
\]
and
\begin{align*}
\nabla\left(\int_0^{\tau \eta_\tau}\frac{\tau \eta_\tau-g}{(\bar\rho+g)^3}dg\right)\leq & C(\delta)\tau^2\left(|\eta_\tau\nabla\eta_\tau|+\eta_\tau^2|\nabla\bar\rho|\right)\\
\leq & C(\delta)\tau^2(e^{-2s}|\diver J_0||\nabla\diver J_0|+|\rho_{\mathcal{O},1}||\nabla \rho_{\mathcal{O},1}|+\tau^2|r\nabla r|)\\
&+C(\delta)\tau^2|\nabla\bar\rho|[e^{-2s}(\diver J_0)^2+\rho_{\mathcal{O},1}^2+\tau^2 r^2].
\end{align*}
Thus we obtain
\begin{align*}
&-\frac{1}{2\tau^2}\int_{\T^d} \frac{\overline{\mathcal{R}}}{\rho_\tau}\cdot\diver\left[\nabla \bar\rho\otimes\nabla \bar\rho\int_0^{\tau \eta_\tau}\frac{\tau \eta_\tau-g}{(\bar\rho+g)^3}dg\right]dx\\
\leq & \frac{1}{16}\int_{\T^d} \frac{\overline{|\mathcal{R}}|^2}{\rho_\tau}dx+C(\delta,\|J_0\|_{H^{2+\frac{d}{2}}_x},\|\bar\rho\|_{H^{2+\frac{d}{2}}_x},\|\rho_{\mathcal{O},1}\|_{H^{1+\frac{d}{2}}_x})\\
&+C(\delta,\|\bar\rho\|_{H^{2+\frac{d}{2}}_x})\tau^2E(r,\overline{\mathcal{R}})^\frac32.
\end{align*}
For the other quadratic terms, we also have
\begin{align*}
&-\frac{1}{\tau}\int_{\T^d} \frac{\overline{\mathcal{R}}}{\rho_\tau}\cdot\diver\left[(\nabla \bar\rho\otimes\nabla(e^{-s}\diver J_0+\rho_{\mathcal{O},1}))^s\int_0^{\tau \eta_\tau}\frac{\tau \eta_\tau-g}{(\bar\rho+g)^3}dg\right]dx\\
\leq &\frac{\tau}{16}\int_{\T^d} \frac{\overline{|\mathcal{R}}|^2}{\rho_\tau}dx+C(\delta,\|J_0\|_{H^{2+\frac{d}{2}}_x},\|\bar\rho\|_{H^{2+\frac{d}{2}}_x},\|\rho_{\mathcal{O},1}\|_{H^{1+\frac{d}{2}}_x})\tau\\
&+C(\delta,\|J_0\|_{H^{2+\frac{d}{2}}_x},\|\bar\rho\|_{H^{2+\frac{d}{2}}_x},\|\rho_{\mathcal{O},1}\|_{H^{1+\frac{d}{2}}_x})\tau^3E(r,\overline{\mathcal{R}})^\frac32,
\end{align*}
\begin{align*}
&-\tau^{-2}\int_{\T^d} \frac{\overline{\mathcal{R}}}{\rho_\tau}\cdot\nabla\left[\int_0^{\tau \eta_\tau} p''(\bar\rho+g)(\tau \eta_\tau-g)dg\right]dx\\
\leq & \frac{1}{16}\int_{\T^d} \frac{\overline{|\mathcal{R}}|^2}{\rho_\tau}dx+C(\|J_0\|_{H^{2+\frac{d}{2}}_x},\|\bar\rho\|_{H^{2+\frac{d}{2}}_x},\|\rho_{\mathcal{O},1}\|_{H^{1+\frac{d}{2}}_x})\\
&+C(\delta,\|\bar\rho\|_{H^{2+\frac{d}{2}}_x})\tau^2E(r,\overline{\mathcal{R}})^\frac32.
\end{align*}
and
\begin{align*}
-\int_{\T^d} \frac{\overline{\mathcal{R}}}{\rho_\tau}\cdot\eta_\tau\nabla V(\eta_\tau)dx\leq &\frac{1}{16}\int_{\T^d} \frac{\overline{|\mathcal{R}}|^2}{\rho_\tau}dx+C(\|J_0\|_{H^{2+\frac{d}{2}}_x},\|\bar\rho\|_{H^{2+\frac{d}{2}}_x},\|\rho_{\mathcal{O},1}\|_{H^{1+\frac{d}{2}}_x})\\
&+C(\delta,\|\bar\rho\|_{H^{2+\frac{d}{2}}_x})\tau^2E(r,\overline{\mathcal{R}})^\frac32.
\end{align*}
Thus we obtain
\begin{equation}\label{eq:estS_6}
\begin{aligned}
\int_{\T^d} \frac{\overline{\mathcal{R}}}{\rho_\tau}\cdot\mathcal{B}_2(r,\nabla r)dx\leq & \frac14 \int_{\T^d} \frac{\overline{|\mathcal{R}}|^2}{\rho_\tau}dx+C(\delta,\|\bar\rho\|_{H^{2+\frac{d}{2}}},\|J_0\|_{H^{2+\frac{d}{2}}},\|\rho_{\mathcal{O},1}\|_{H^{1+\frac{d}{2}}})E(r,\overline{\mathcal{R}})\\
&+C(\delta,\|J_0\|_{H^{2+\frac{d}{2}}_x},\|\bar\rho\|_{H^{2+\frac{d}{2}}_x},\|\rho_{\mathcal{O},1}\|_{H^{1+\frac{d}{2}}_x})\tau^2 E(r,\overline{\mathcal{R}})^\frac32\\
&+C(\|J_0\|_{H^{2+\frac{d}{2}}_x},\|\bar\rho\|_{H^{2+\frac{d}{2}}_x},\|\rho_{\mathcal{O},1}\|_{H^{1+\frac{d}{2}}_x}).
\end{aligned}
\end{equation}
Last, we need to provide the bound of 
\[
\int_{\T^d}\frac{\overline{\mathcal{R}}}{\rho_\tau}\cdot\mathcal{B}_3(\nabla r)dx.
\]
By definition \eqref{eq:B3} of $\mathcal{B}_3(\nabla r)$, we obtain
\begin{equation}\label{eq:estS_6}
\begin{aligned}
\int_{\T^d}\frac{\overline{\mathcal{R}}}{\rho_\tau}\cdot\mathcal{B}_3(\nabla r)dx\leq C(\delta,\|J_0\|_{H^{3+\frac{d}{2}}},\|\rho_\tau\|_{H^{2+\frac{d}{2}}},\|\bar\rho\|_{H^{2+\frac{d}{2}}},\|\rho_{\mathcal{O},1}\|_{H^{2+\frac{d}{2}}})E_\tau(r,\overline{\mathcal{R}}).
\end{aligned}
\end{equation}
By summarizing estimates \eqref{eq:estS_1} to \eqref{eq:estS_6}, we obtain \eqref{eq:estE}, and the functions $\mathcal{S}_j$, $j=1,2,3$, can be checked to satisfy the bound \eqref{eq:prop8_1} to \eqref{eq:prop8_3}.
\end{proof}

Now we can finish the estimate of $(r,\overline{\mathcal{R}})$.

\begin{prop}\label{prop:bd_en}
Under Assumption \ref{asmp:RA}, there exists $\tau^*=\tau^*(T_0,\delta,M)$, such that for $0<\tau\leq \tau^*$ the energy functional $E_\tau(r,\overline{\mathcal{R}})$ is bounded by
\begin{equation}\label{eq:propEr}
E_\tau(r,\overline{\mathcal{R}})(t)+\int_0^t\int_{{\T^d}} \frac{|\overline{\mathcal{R}}|^2}{\rho_\tau}dxds\leq e^{M(1+t)}\left[1+E_\tau(0,\overline{\mathcal{R}}(0))\right]
\end{equation}
for $t\in [0,T_0]$.
\end{prop}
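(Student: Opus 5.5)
The plan is a Gronwall argument applied to the differential inequality \eqref{eq:estE}, combined with a bootstrap (continuity) argument to handle the superlinear term $\tau^2\mathcal{S}_2E_\tau^{3/2}$.

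\emph{Step 1: reduce to a differential inequality.} Insert \eqref{eq:estE} into \eqref{eq:dtEr} and absorb $\frac34\int|\overline{\mathcal{R}}|^2/\rho_\tau$ into the dissipation on the left. This gives
\[
\frac{d}{dt}E_\tau+\frac14\int_{\T^d}\frac{|\overline{\mathcal{R}}|^2}{\rho_\tau}dx\leq \mathcal{S}_1E_\tau+\tau^2\mathcal{S}_2E_\tau^{3/2}+\mathcal{S}_3 .
\]
(The constant $\frac14$ in front of the dissipation, instead of $1$, can be absorbed into the exponential constant at the end, or recovered by sharpening the Cauchy--Schwarz splittings; I will not track it.)

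\emph{Step 2: uniform time-integrability of the coefficients.} By Assumption \ref{asmp:RA} with $m\ge8$, every Sobolev norm appearing in \eqref{eq:prop8_1}--\eqref{eq:prop8_3} is bounded by a constant $C(\delta,M)$. The norm $\|J_\tau\|_{L^2_tH^2_x}$ is bounded by $\sqrt{T_0}M$, and $\|\rho_{\mathcal{O},1}\|_{H^7_x}$ is bounded by $M$ since $m-1\ge7$. The key computations are
\[
\int_0^{T_0}\tau^{-2}e^{-t/\tau^2}dt\le1,\qquad \int_0^{T_0}\tau^{-2}e^{-2t/\tau^2}dt\le\tfrac12,\qquad \int_0^{T_0}\tau^{-4}e^{-4t/\tau^2}t\,dt=\int_0^{\infty}e^{-4\sigma}\sigma\,d\sigma=\tfrac1{16}.
\]
The last identity is exactly where the gain $\sqrt t$ from writing $\rho_0^{-1}-\rho_\tau^{-1}$ as a time integral is essential. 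Hence
\[
\int_0^{T_0}\mathcal{S}_1\le C(1+T_0),\qquad \int_0^{T_0}\mathcal{S}_3\le C(1+T_0),\qquad \sup_t\mathcal{S}_2\le C,
\]
with $C=C(\delta,M)$ independent of $\tau$.

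\emph{Step 3: bootstrap.} Set $K:=e^{C(1+T_0)}[1+E_\tau(0)]$ and let $T^*$ be the maximal time in $[0,T_0]$ on which $E_\tau\le 2K$. By Assumption \ref{asmp:RA} $E_\tau$ is continuous in time, so $T^*>0$. On $[0,T^*]$ we have $\tau^2\mathcal{S}_2E_\tau^{3/2}\le \tau^2C\sqrt{2K}\,E_\tau$, which is linear in $E_\tau$. Gronwall's inequality then yields
\[
E_\tau(t)+\frac14\int_0^t\!\!\int_{\T^d}\frac{|\overline{\mathcal{R}}|^2}{\rho_\tau}\le e^{\int_0^t(\mathcal{S}_1+\tau^2C\sqrt{2K})}\Big[E_\tau(0)+\int_0^t\mathcal{S}_3\Big].
\]
Choose $\tau^*$ so small that $\tau^{*2}C\sqrt{2K}\,T_0\le\log(3/2)$. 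The right-hand side is then strictly less than $2K$, so the bootstrap closes and $T^*=T_0$. Renaming constants (the constant called $M$ in \eqref{eq:propEr} is a generic $C(\delta,M)$) gives \eqref{eq:propEr}.

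\emph{Main obstacle.} The threshold $\tau^*$ depends on $K$, and hence on $E_\tau(0)=\frac1{2}\int\rho_0^{-1}\tau^2|\overline{\mathcal{R}}(0)|^2dx$, since $r(0)=0$. I must therefore check that $\tau\overline{\mathcal{R}}(0)$ is bounded uniformly in $\tau$. From \eqref{eq:r_1}, \eqref{eq:J_aprx} and \eqref{eq:alter_R}, $\tau^2\overline{\mathcal{R}}(0)=J_\tau(0)-\tilde J_\tau(0,0)+\tau(J_{\mathcal{I},1}(0)-P_{J,1})$. The $\tau^{-1}J_0$ terms cancel and the $O(1)$ terms match by \eqref{eq:ini_outer}, so $\tau^2\overline{\mathcal{R}}(0)=O(\tau)$ in $L^2_x$. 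This gives $E_\tau(0)\le C(\delta,M)$ independently of $\tau$, and $\tau^*=\tau^*(T_0,\delta,M)$ as claimed.
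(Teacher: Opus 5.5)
Your proposal is correct and is essentially the paper's proof. Both apply Gronwall with the $\tau$-uniformly $L^1_t$ weights $\tau^{-2}e^{-t/\tau^2}$, $\tau^{-4}e^{-4t/\tau^2}t$, $\tau^{-2}e^{-2t/\tau^2}$, and close with a continuity argument that turns $\tau^2\mathcal{S}_2E_\tau^{3/2}$ into a linear term (the paper bootstraps on $\tau^2E_\tau^{1/2}\le 1$ instead of $E_\tau\le 2K$). Your check that $E_\tau(0)$ is bounded independently of $\tau$ fills a point the paper uses only tacitly to get $\tau^*=\tau^*(T_0,\delta,M)$; in fact \eqref{eq:aprx} gives $r(0)=0$, $\mathcal{R}(0)=0$ and $\tau\overline{\mathcal{R}}(0)=\mathcal{A}(\rho_0,\diver J_0)$ exactly. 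One bookkeeping fix: define $K:=e^{C(1+T_0)}\bigl[E_\tau(0)+C(1+T_0)\bigr]$, because with your $K=e^{C(1+T_0)}[1+E_\tau(0)]$ the term $\int_0^t\mathcal{S}_3$ spoils the inequality ``right-hand side $<2K$'' once $C(1+T_0)$ is large.
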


\begin{proof}
Since $M$ given in Assumption \ref{asmp:RA} dominates the constants in \eqref{eq:prop8_1} to \eqref{eq:prop8_3}, then by \eqref{eq:estE} we have
\begin{align*}
\frac{d}{dt}E_\tau(r,\overline{\mathcal{R}})+\int_{{\T^d}} \frac{|\overline{\mathcal{R}}|^2}{\rho_\tau}dx\leq M[&(1+\tau^{-2}e^{-\frac{t}{\tau^2}})E_\tau(r,\overline{\mathcal{R}})+\tau^2E_\tau(r,\overline{\mathcal{R}})^\frac32\\
&+1+\tau^{-4}e^{-\frac{4t}{\tau^2}}t+\tau^{-2}e^{-\frac{2t}{\tau^2}}].
\end{align*}
We fist let $\tau^*<E_\tau(0,\overline{\mathcal{R}}_0)^{-\frac12}$. Then by our assumption and the continuity of $E_\tau(r,\overline{\mathcal{R}})$, for $\tau\leq \tau^*$ there exist a $T^*>0$ such that on $[0,T^*]$ we have
\[
\tau^2E_\tau(r,\overline{\mathcal{R}})^\frac12\leq 1.
\]
Then on $[0,T^*]$ it follows that
\[
\frac{d}{dt}E_\tau(r,\overline{\mathcal{R}})+\int_{{\T^d}} \frac{|\overline{\mathcal{R}}|^2}{\rho_\tau}dx\leq h_1(t)E_\tau(r,\overline{\mathcal{R}})+h_2(t),
\]
where
\[
h_1(t)=C(1+\tau^{-2}e^{-\frac{t}{\tau^2}}),\quad h_2(t)=1+\tau^{-4}e^{-\frac{4t}{\tau^2}}t+\tau^{-2}e^{-\frac{2t}{\tau^2}}.
\]
By the Gronwall argument we obtain
\begin{align*}
E_\tau(r,\overline{\mathcal{R}})+\int_0^t\int_{{\T^d}} \frac{|\overline{\mathcal{R}}|^2}{\rho_\tau}dxds\leq e^{\int_0^t h_1(r)dr}E_\tau(0,\overline{\mathcal{R}}(0))+\int_0^t e^{\int_r^t h_1(r_1)dr_1}h_2(r)dr
\end{align*}
where the definitions of $h_j(t)$ give
\[
\int_r^t h_1(r_1)dr_1=M\left(t-r+e^{-\frac{r}{\tau^2}}-e^{-\frac{t}{\tau^2}}\right),
\]
and
\begin{align*}
\int_0^t e^{\int_r^t h_1(r_1)dr_1}h_2(r)dr\leq & \int_0^t e^{M(t-r)}\left(1+\tau^{-4}e^{-\frac{4r}{\tau^2}}r+\tau^{-2}e^{-\frac{2r}{\tau^2}}\right)dr\\
\leq & (1+e^{Mt}).
\end{align*}
Thus we prove \eqref{eq:propEr} on $[0,T^*]$, and the upper bound of $T^*$ is given by the requirement
\[
\tau^2E_\tau(r,\overline{\mathcal{R}})(T^*)^\frac12\leq \tau^2e^{M(1+T^*)}\left[1+E_\tau(0,\overline{\mathcal{R}}(0))\right]^\frac12\leq 1.
\]
Therefore we can update $\tau^*=\tau^*(T_0,\delta,M)$ small such that for $\tau\leq \tau^*$ we have $T^*\geq T_0$.
\end{proof}

Now we are at the point to finish the proof of Theorem \ref{thm:main}.

\begin{proof}[Proof of Theorem \ref{thm:main}]
Following Proposition \ref{prop:bd_en} and Definition \eqref{eq:en_r} of the remainder energy, for $\tau\leq\tau^*$ we have
\[
\|\nabla r\|_{L^\infty_tL^2_x}+\|\tau\overline{\mathcal{R}}\|_{L^\infty_tL^2_x}+\|\overline{\mathcal{R}}\|_{L^2_{t,x}}\leq C(T_0,\delta,M).
\]
First, by \eqref{eq:r_1} and expression \eqref{eq:rho_aprx}, it is straightforward to check 
\[
\int_{\T^d} r(s,t,x)\,dx=0,\quad\textrm{for all }(s,t),
\]
then by Poincar\'e inequality we have
\[
\|r\|_{L^\infty_tH^1_x}\leq C\|\nabla r\|_{L^\infty_tL^2_x}\leq C(T_0,\delta,M).
\]
On the other hand, by Definition \eqref{eq:alter_R} of the modified remainder $\overline{\mathcal{R}}$, we have
\begin{align*}
\overline{\mathcal{R}}-\mathcal{R}=&\tau^{-1}(J_{\mathcal{I},1}-P_{J,1})(s)\\
=&+2\tau^{-1} e^{-s}(1-e^{-s}-s)\diver\left[\frac{J_0\otimes \bar J(\rho_0)}{\rho_0}\right]^s\\
&+2\tau^{-1} e^{-s}(1+e^{-2s}-2e^{-s})\diver\left\{\left[\frac{J_0}{\rho_0}\otimes\diver\left(\frac{J_0\otimes J_0}{\rho_0}\right)\right]^s-\frac{J_0\otimes J_0\diver J_0}{2\rho_0^2}\right\}\\
&-\tau^{-1} e^{-s}(s+1)\mathcal{A}(\rho_0,\diver J_0),
\end{align*}
where the fast time $s=\tau^{-2}t$. Provided the regularity Assumption \ref{asmp:RA}, it follows that
\[
\|\tau(\overline{\mathcal{R}}-\mathcal{R})\|_{L^\infty_tL^2_x}=\|J_{\mathcal{I},1}-P_{J,1}\|_{L^\infty_tL^2_x}\leq C(T_0,\delta,M),
\]
and
\[
\|\overline{\mathcal{R}}-\mathcal{R}\|_{L^2_{t,x}}=\|\tau^{-1}(J_{\mathcal{I},1}-P_{J,1})\|_{L^2_{t,x}}\leq C(T_0,\delta,M).
\]
Therefore $\overline{\mathcal{R}}$ and $\mathcal{R}$ share the same bounds, and we obtain \eqref{eq:main_r}.

For bound \eqref{eq:main_J}, by the asymptotic expansion \eqref{eq:aprx} we have
\[
J_\tau-\tau^{-1}e^{-\frac{t}{\tau^2}}J_0-\bar J=\tau J_{\mathcal{O},1}+\tau^2\overline{\mathcal{R}}.
\]
Combining it with Assumption \eqref{asmp:RA} and the bounds of $\overline{\mathcal{R}}$, it implies
\begin{align*}
\|J_\tau-&\tau^{-1}e^{-\frac{t}{\tau^2}}J_0-\bar J\|_{L^\infty_tL^2_x}\\
&\leq  \tau \|\mathcal{A}(\bar\rho,\rho_{\mathcal{O},1})\|_{L^\infty_tL^2_x}+\tau \|\tau\overline{\mathcal{R}}\|_{L^\infty_tL^2_x}
\leq  C(T_0,\delta,M).
\end{align*}
Also, we have
\[
\rho_\tau-\bar\rho-\tau [e^{-s}\diver J_0+\rho_{\mathcal{O},1}]=\tau^2 r,
\]
which gives \eqref{eq:main_r} by applying the bound of $r$.
\end{proof}
\section*{Acknowledgments}
The first and second authors are members of the GNAMPA group of INdAM. The first author acknowledges financial support from the Italian Ministry of University and Research (MUR) through the Project 2022YXWSLR \emph{Boundary analysis for dispersive and viscous fluids}, and the Excellence Department Project awarded to GSSI, CUP D13C22003740001.

\end{document}